\newtheorem{theorem}{Theorem}[section]
\newtheorem{lemma}[theorem]{Lemma}
\newtheorem{definition}{Definition}[section]
\newproof{proof}{Proof}
\numberwithin{theorem}{section}
\newcommand{\Cn}{ \mathbb{C}^n }
\newcommand{\CO} {\Cn\setminus\Omega}
\newcommand{\Om}{\Omega = \left\{z\in\Cn : \rho(z)<0 \right\}}
\newcommand{\dom}{ {\partial\Omega} }
\renewcommand{\O}{\Omega}
\newcommand{\eps}{\varepsilon}
\newcommand{\C}{\mathbb{C}}
\newcommand{\N}{\mathbb{N}}
\newcommand{\RE}{\text{\rm{Re}}}
\newcommand{\IM}{\text{\rm{Im}}}
\newcommand{\pr}{ \text{\rm{pr}}}
\newcommand{\cn}{\frac{1}{(2\pi i)^n}}
\newcommand{\dbar}{\bar\partial}
\newcommand{\intl}{\int\limits}
\newcommand{\suml}{\sum\limits}
\newcommand{\dist}[2]{ \text{\rm{dist}} (#1,\ #2)}
\newcommand{\supp}[1]{ \text{\rm{supp}}\ {#1} }
\newcommand{\BMO}{ \text{\rm{BMO}}}
\newcommand{\scp}[2]{ \left\langle #1,\ #2  \right\rangle }
\newcommand{\V}[2]{\scp{\partial\rho(#1)}{#1 - #2}}
\newcommand{\norm}[1]{\left\lVert #1 \right\rVert}
\newcommand{\abs}[1]{\left\lvert #1 \right\rvert}
\begin{document}

\begin{frontmatter}

\title{External area integral inequality for the Cauchy-Leray-Fantappi\`{e} integral \footnote{The work is supported by Russian Science Foundation Grant 14-41-00010.}}

\author[mymainaddress]{Alexander Rotkevich}
\ead{rotkevichas@gmail.com}
\address[mymainaddress]{Department of Mathematical analysis, Mathematics and Mechanics Faculty, St. Petersburg State University, 198504,  Universitetsky prospekt, 28, Peterhof, St. Petersburg, Russia}

\begin{abstract}
In this paper we extend Luzin inequality for functions defined by the Cauchy-Leray-Fantappi\`{e} integral on the complement of a convex domain in $\mathbb{C}^n$. 
\end{abstract}

\begin{keyword} Area inequality \sep Cauchy-Leray-Fantappi\`{e}
integral
\MSC[2010] 32E30\sep 41A10
\end{keyword}

\end{frontmatter}

\section{Introduction\label{intro}}

Let $G\subset\C$ be a Radon domain, for $z\in \partial G$ we consider a sector $S(z)=\{\xi\in G: \dist(\xi,\partial G)\geq \frac12 |\xi-z|\}.$ It is well known (see \cite{S58}, \cite{D77}), that for function $f$ holomorphic on $G$ one has
\begin{equation}
 \norm{I_f}_{L^p(\partial G)} \leq c(p,G)\norm{f}_{L^p(\partial G)},\ 1<p<\infty,
\end{equation}
with some constant $c(p,G),$ where $I_f$ is an area-integral
\begin{equation}
 I_f(z) = \left(\int\limits_{S(z)} |f'(\xi)|^2 d\mu(\xi)\right)^{1/2}
\end{equation}
and $d\mu$ is Lebesgue measure on $\C.$

There are many many generalizations of this inequality for holomorphic functions on regular domains in $\Cn$ by P. Ahern, J. Bruna (\cite{AB88}), A. Nagel, E.M. Stein, S. Wainger \cite{NSW81}, G. Sardine \cite{Sa93},  S. Krantz and S.Y. Lee \cite{KL97}. Our main result (theorems \ref{thm:area_int}, \ref{thm:area_int_BMO}) is extension of this inequality to functions generated by Cauchy-Leray-Fantappi\`{e} integral and defined on the complement of a convex domain in~$\mathbb{C}^n$. The motivation of this paper is the possibility of applications of these results to the characterization of spaces of analytic functions by pseudoanalytical extensions (see \cite{R13}).

\section{Main notations and definitions \label{notations}}

Let $\Cn$ be the space of $n$ complex variables, $n\geq 2,$ $z =
(z_1,\ldots, z_n),\ z_j = x_j + i y_j;$
$$\partial_j f =\frac{\partial f}{\partial z_j} = \frac{1}{2}\left( \frac{\partial f}{\partial x_j} - i \frac{\partial f}{\partial y_j}\right), \quad \bar\partial_j f = \frac{\partial f}{\partial\bar{z}_j} = \frac{1}{2}\left( \frac{\partial f}{\partial x_j} + i \frac{\partial f}{\partial y_j}\right),$$

$$\partial f = \suml_{k=1}^{n} \frac{\partial f}{\partial z_k} dz_k,\quad \bar{\partial} f = \suml_{k=1}^{n} \frac{\partial f}{\partial\bar{z}_k} d\bar{z}_k,\quad df=\partial f+  \bar{\partial} f.$$

\noindent The notation
$$\scp{\partial f(z)}{w} = \suml_{k=1}^{n} \frac{\partial f(z)}{\partial z_k} w_k.$$
is used to indicate the action of $\partial f$ on the vector
$w\in\Cn,$ and $$|\bar\partial f| = \abs{\frac{\partial f}{\partial
z_1}} + \ldots+\abs{\frac{\partial f}{\partial z_n}}.$$

The euclidean distance form the point $z\in\Cn$ to the set
$D\subset\Cn$ we denote as $\dist{z}{D} = \inf\{\abs{z-w}:w\in D\}.$
Lebesgue measure in $\Cn$ we denote as $d\mu.$

For a multiindex $\alpha =
(\alpha_1,\ldots,\alpha_n)\in\mathds{N_0}^n$ we set
$\abs{\alpha}~=~\alpha_1~+~\ldots~+~\alpha_n$ and
$\alpha!~=\alpha_1!\ldots\alpha_2!,$
 also $z^\alpha =
z_1^{\alpha_1}\ldots z_n^{\alpha_n}$ and $\partial^\alpha f =
\frac{\partial^{\abs{\alpha}} f}{\partial
\bar{z}_1^{\alpha_1}\ldots\partial \bar{z}_n^{\alpha_n}}.$



Let $\Om$ be a strongly convex domain with a $C^3$-smooth defining
function. We need to consider a family of domains $$\Omega_t =
\left\{z\in\Cn : \rho(z)<t \right\}$$ that are also strongly convex
for each $|t|<\eps,$ where $\eps>0$ is small enough, that is $d^2\rho(z)$ is positive definite when $|\rho(z)|\leq\eps.$ For $z\in \O_{\eps}\setminus\O_{-\eps}$ we denote the nearest point on $\dom$ as $\pr_{\dom}(z).$ Then the mapping 
$$\pr_{\dom} : \O_{\eps}\setminus\O_{-\eps}\to \dom$$
is well defined, $C^2-$smooth on $\O_{\eps}\setminus\O$ and $|z-\pr_{\dom}(z)|=\dist{z}{\dom}.$

For $\xi\in\dom_t$ we define the complex tangent space
$$T_\xi = \left\{ z\in\Cn : \scp{\partial{\rho}(\xi)}{\xi-z} = 0  \right\}.  $$

The space of holomorphic functions we denote as $H(\O).$
Throughout this paper we use notations $\lesssim,\ \asymp.$  We let
$f\lesssim g$ if $f\leq c g$ for some constant $c>0,$ that doesn't
depend on main arguments of functions $f$ and $g$ and usually depend
only on dimension $n$ and domain $\O.$ Also $f\asymp g$ if $c^{-1}
g\leq f\leq c g$ for some $c>1.$

\section{Cauchy-Leray-Fantappi\`{e} formula \label{CLF}}

In the context of theory of several complex variables there is no
unique reproducing formula formula, however we could use the Leray
theorem, that allows us to construct holomorphic reproducing kernels
(\cite{AYu79}, \cite{L59}, \cite{Ra86}). For convex domain $\Om$
this theorem brings us Cauchy-Leray-Fantappi\`{e} formula, and for
$f\in H^1(\Omega)$ and $z\in\O$ we have
\begin{equation} \label{eq:CLF}
 f(z) = K_\O f(z) = \cn \int\limits_{\dom} \frac{f(\xi) \partial\rho(\xi)\wedge(\bar{\partial}\partial\rho(\xi))^{n-1}}{\scp{\partial\rho(\xi)}{\xi-z}^n} = \int\limits_{\dom} f(\xi) K(\xi,z) \omega(\xi),
\end{equation}
where $\omega(\xi) = \cn
\partial\rho(\xi)\wedge(\bar{\partial}\partial\rho(\xi))^{n-1},$ and
$K(\xi,z) =\scp{\partial\rho(\xi)}{\xi-z}^{-n}.$

The $(2n-1)$-form $\omega$ defines on $\dom_t$ Leray-Levy measure
$dS$ that is equivalent to Lebesgue surface measure $d\sigma_t$
(for details see \cite{AYu79}, \cite{LS13}, \cite{LS14}). This
allows us to identify Lebesgue spaces
defined with respect to measures $d\sigma_t$ and $dS$. Also note,
that measure $dV$ defined by the $2n$-form
$d\omega=(\partial\dbar\rho)^{n}$ is equivalent to Lebesgue measure
$d\mu$ in $\Cn.$

By \cite{R12} the integral operator $K_\O$ defines a bounded mapping
on $L^p(\dom)$ to $H^p(\Omega)$ for $1<p<\infty.$

The function $d(w,z) = \abs{\V{w}{z}}$  defines on $\dom$
quasimetric, and if $B(z,\delta) = \{w\in\dom: d(w,z)<\delta\}$ is a
quasiball with respect to $d$ then $\sigma(B(z,\delta))\asymp
\delta^n,$ see for example \cite{R12}. Therefore $\{\dom,d,\sigma\}$
is a space of homogeneous type.

Note also the crucial role in the forthcoming considerations of the
following estimate that is proved in \cite{R13}.
\begin{lemma} \label{lm:QM_est1} Let $\O$ be strongly convex, then
$$d(w,z) \asymp \rho(w) + d(\pr_{\dom}(w),z),\ w\in\CO,\ z\in\dom.$$
\end{lemma}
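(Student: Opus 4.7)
The plan is to expand $\scp{\partial\rho(w)}{w-z}$ around the projection $w' := \pr_{\dom}(w)$ via
\[
\scp{\partial\rho(w)}{w-z} \;=\; A_0 \;+\; b \;+\; E,
\]
where $A_0 := \scp{\partial\rho(w')}{w-w'}$, $b := \scp{\partial\rho(w')}{w'-z}$, and $E := \scp{\partial\rho(w)-\partial\rho(w')}{w-z}$, and then to estimate each summand. By construction $|b| = d(w',z)$, so the task reduces to proving $|A_0 + b + E| \asymp \rho(w) + |b|$.

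Because $w'$ is the Euclidean nearest-point projection, $w-w'$ is an outward real normal to $\dom$ at $w'$; the real normal, viewed as a complex vector, is a positive multiple of $\overline{\partial\rho(w')}$, so $w-w' = t\,\overline{\partial\rho(w')}/|\partial\rho(w')|$ for some real $t>0$ with $t \asymp |w-w'| \asymp \rho(w)$ (using $|\nabla\rho|\asymp 1$ near $\dom$). A direct computation then gives $A_0 = t\,|\partial\rho(w')| \in (0,\infty)$, so $A_0$ is real positive and $A_0 \asymp \rho(w)$. Convexity of $\rho$ at the boundary points $w',z \in \dom$ yields $\RE b \geq 0$; strong convexity sharpens this to $\RE b \geq c_0 |w'-z|^2$, which is needed below. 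Lipschitz continuity of $\partial\rho$ (from $\rho \in C^3$) gives
\[
|E| \;\leq\; C\,|w-w'|\,|w-z| \;\leq\; C\rho(w)^2 + C\rho(w)\,|w'-z|.
\]

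The upper bound $d(w,z) \lesssim \rho(w) + d(w',z)$ is immediate from the triangle inequality, since $|w'-z| \leq \diam{\O}$ absorbs the cross term in $|E|$. For the lower bound, the key observation is that $A_0 > 0$ is real and $\RE b \geq 0$, so
\[
|A_0 + b|^2 \;=\; A_0^2 + |b|^2 + 2A_0\RE b \;\geq\; A_0^2 + |b|^2,
\]
whence $|A_0 + b| \geq (A_0 + |b|)/\sqrt{2} \gtrsim \rho(w) + d(w',z)$. Then $d(w,z) \geq |A_0+b| - |E| \gtrsim \rho(w) + d(w',z)$ as long as $|E| \leq \tfrac{1}{2}|A_0+b|$.

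This last absorption is the main obstacle. The summand $C\rho(w)^2$ is trivially $\leq \eps\rho(w)$. For the cross term $\rho(w)|w'-z|$ I split on the size of $|w'-z|$: when $|w'-z|$ is small, the strong-convexity bound $d(w',z) \gtrsim |w'-z|^2$ gives $|w'-z| \lesssim \sqrt{d(w',z)}$, and Young's inequality $\rho(w)\sqrt{d(w',z)} \leq \tfrac{\eta}{2}\rho(w)^2 + \tfrac{1}{2\eta}d(w',z)$ combined with the extra smallness $\rho(w)\leq\eps$ makes this as small as desired compared to $\rho(w)+d(w',z)$ once $\eps$ is small; when $|w'-z|$ is bounded away from $0$, $d(w',z)$ is bounded below by a constant and $\rho(w)|w'-z| \leq \diam{\O}\cdot\rho(w) \ll d(w',z)$ once $\rho(w)$ is small. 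Shrinking the neighborhood $\O_\eps$ if necessary, both regimes give $|E| \leq \tfrac{1}{2}|A_0+b|$, which completes the lower bound and hence the comparison.
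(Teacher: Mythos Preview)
Your argument is correct. The paper itself does not prove this lemma: it merely quotes the estimate and refers to \cite{R13} for the proof, so there is no approach in the present paper to compare against. Your decomposition $\scp{\partial\rho(w)}{w-z}=A_0+b+E$ with $A_0$ real positive of size $\rho(w)$, $|b|=d(w',z)$ with $\RE b\ge 0$, and a small error $E$, is exactly the natural route and is essentially what one finds in the cited reference. Two minor remarks on the write-up: (i) the case split on $|w'-z|$ is not really needed, since the strong-convexity bound $\RE b\ge c_0|w'-z|^2$ holds uniformly for $w',z\in\dom$ (the Hessian of $\rho$ is uniformly positive definite on a neighbourhood of $\dom$), so $|w'-z|\lesssim d(w',z)^{1/2}$ globally and then $\rho(w)\sqrt{d(w',z)}\le\sqrt{\eps}\,(\rho(w)+d(w',z))$ by splitting on whether $\rho(w)\lessgtr d(w',z)$; (ii) strictly speaking the lemma is asserted for all $w\in\CO$, whereas your absorption of $E$ uses $\rho(w)<\eps$ --- but the paper only ever applies the lemma in the strip $\O_\eps\setminus\O$, and for $\rho(w)$ bounded away from zero the estimate is trivial by compactness, so this is harmless.
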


\subsection{Kor\'{a}nyi regions}
For $\xi\in\dom$ and $\eps>0$ we define the {\it inner Kor\'{a}nyi
region} as
$$D^i(\xi,\eta,\eps) = \{\tau\in\O : \pr_{\dom}(\tau) \in B(\xi,-\eta\rho(\tau)),\ \rho(\tau)>-\eps \}. $$

The strong convexity of $\O$ implies that area-integral inequality
by S.~Krantz and S.Y.~Li~\cite{KL97} for $f\in H^p(\O),\
0<p<\infty,$ could be expressed as

\begin{equation} \label{ineq:Luzin_internal}
  \intl_\dom d\sigma(z) \left( \intl_{D^i(z,\eta,\eps)} \abs{\partial f(\tau)}^2
  \frac{d\mu(\tau)}{(-\rho(\tau))^{n-1}}\right)^{p/2} \leq c(\O,p) \intl_\dom \abs{f}^p d\sigma.
\end{equation}

Consider the decomposition of vector $\tau\in\Cn$ as $\tau = w +
t n(\xi),$ where $w\in T_\xi,\ t\in\C,$ and
$n(\xi)=\frac{\bar\partial\rho(\xi)}{\abs{\bar\partial\rho(\xi)}}$ is a
complex normal vector at $\xi$. We define the {\it external
Kor\'{a}nyi region} as
\begin{multline}\label{df:KoranyExt}
 D^e(\xi,\eta,\eps) = \{\tau\in\CO : \tau = w + t n(\xi),\\ w\in T_\xi,\
t\in\C,\ \abs{w}<\sqrt{\eta\rho(\tau)},\ \abs{\IM(t)}<\eta\rho(\tau),\
\rho(\tau)<\eps \}.
\end{multline}
The main result of this paper is the area-integral inequality similar to~(\ref{ineq:Luzin_internal}) for external
regions~$D^e(\xi,\eta,\eps).$

We point out two rules for integration over regions $D^e(\xi,\eta,\eps).$
First, for every function $F$ we have
$$\intl_{\O_\eps\setminus\O} \abs{F(z)} d\mu(z) \asymp \intl_\dom d\sigma(\xi)
\intl_{D^e(\xi,\eta,\eps)} \abs{F(\tau)}
\frac{d\mu(\tau)}{\rho(\tau)^{n}}.
$$
Second, if $F(w) = \tilde{F}(\rho(w))$ then
$$ \intl_{D^e(\xi,\eta,\eps)} \abs{F(\tau)} d\mu(\tau) \asymp \intl_0^\eps \abs{\tilde{F}(t)} t^{n} dt.  $$
Similar rules are valid for regions $D^i(\xi,\eta,\eps).$

We could clarify the estimate of $d(\tau,w)$ in
lemma~\ref{lm:QM_est1} for $\tau\in D^e(z,\eta,\eps).$

\begin{lemma} \label{lm:QM_est2} Let $\O$ be a strongly convex domain and $\eps,\eta>0,$
then
\begin{equation*} \label{ineq:QM_est2}
 d(\tau,w) \asymp \rho(\tau) + d(z,w),\quad z,w\in\dom,\ \tau\in
 D^e(z,\eta,\eps).
\end{equation*}
\end{lemma}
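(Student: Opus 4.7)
\bigskip\noindent\textbf{Proof plan.} The plan is to reduce the claim to Lemma \ref{lm:QM_est1} and then verify that, for $\tau\in D^e(z,\eta,\eps)$, the projection $\pr_{\dom}(\tau)$ lies at quasimetric distance $\lesssim\rho(\tau)$ from $z$. Once that is established, $d(\pr_{\dom}(\tau),w)$ and $d(z,w)$ differ only by an amount that is absorbed in the $\rho(\tau)$-term via the quasi-triangle inequality for $d$ (which is available because $(\dom,d,\sigma)$ is a space of homogeneous type), so Lemma \ref{lm:QM_est1} yields
$$ d(\tau,w)\asymp \rho(\tau)+d(\pr_{\dom}(\tau),w)\asymp \rho(\tau)+d(z,w).$$

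\medskip\noindent The key reduction therefore is to show that
$$ d(z,\pr_{\dom}(\tau))\lesssim \rho(\tau). $$
Write $\tau=z+w_0+tn(z)$ with $w_0\in T_z-z$ and $t\in\C$ as in the definition of $D^e(z,\eta,\eps)$. Because $w_0$ is complex-tangent at $z$, we have $\scp{\partial\rho(z)}{w_0}=0$ and $\scp{\partial\rho(z)}{n(z)}=|\bar\partial\rho(z)|$, so
$$ |\scp{\partial\rho(z)}{\tau-z}|=|t|\,|\bar\partial\rho(z)|\asymp |t|.$$
Since $|\tau-\pr_{\dom}(\tau)|=\dist(\tau,\dom)\lesssim\rho(\tau)$, the triangle inequality reduces everything to the bound
$$|t|\lesssim\rho(\tau).\qquad(\star)$$
The imaginary part is harmless, as the defining condition gives $|\IM t|<\eta\rho(\tau)$ at once, so the entire task is to control $\RE t$.

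\medskip\noindent For $\RE t$ I would use the Taylor expansion of $\rho$ at $z$ together with strong convexity, which yields a real positive-definite quadratic form $Q$ and a constant $c>0$ with
$$ \rho(\tau)=2\RE(t)\,|\bar\partial\rho(z)|+Q(\tau-z)+O(|\tau-z|^3),\qquad Q(v)\geq c|v|^2.$$
Splitting cases by the sign of $\RE t$: when $\RE t\geq 0$, dropping the nonnegative quadratic term gives directly $\RE t\lesssim\rho(\tau)$. When $\RE t<0$, the positivity of $\rho(\tau)$ forces $Q(\tau-z)\gtrsim|\RE t|$, and using $|\tau-z|^2\leq 2(|w_0|^2+|t|^2)\leq 2\eta\rho(\tau)+2|\RE t|^2+2\eta^2\rho(\tau)^2$ together with the smallness of $\eps$ (which makes $|\RE t|$ small and thus $|\RE t|^2\leq|\RE t|/2C$ absorbable) one closes the inequality to obtain $|\RE t|\lesssim\eta\rho(\tau)$.

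\medskip\noindent The main obstacle is the case $\RE t<0$, where $\tau$ sits on the ``wrong'' side of $T_z$. Here the linear support term works against us and one must extract the required bound from the quadratic lower bound supplied by strong convexity; the cubic remainder and the quadratic self-term $|\RE t|^2$ both need to be absorbed, which is only possible because the condition $\rho(\tau)<\eps$ confines $\tau$ to a thin shell and forces $|\RE t|$ to be small in the first place. Choosing $\eps$ sufficiently small makes this bootstrap go through and yields $(\star)$, completing the proof.
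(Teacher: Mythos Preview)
Your approach is essentially the same as the paper's: both reduce to Lemma~\ref{lm:QM_est1}, establish the key bound $d(\pr_{\dom}(\tau),z)\lesssim\rho(\tau)$, and then use the quasi-triangle inequality for $d$ in both directions. The paper simply asserts the key bound $d(\hat\tau,z)\lesssim\eta\rho(\tau)$ without justification, whereas you supply a detailed Taylor-expansion argument for $|t|\lesssim\rho(\tau)$ (this is, in effect, the content of Lemma~\ref{lemma:rhostandart}, which the paper proves later); your extra case analysis for $\RE t<0$ is more than is needed, since for $\eta,\eps$ small enough that case is actually vacuous, but the overall line is correct.
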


\begin{proof}
For $\tau\in D^e(z,\eta,\eps)$ we denote $\hat{\tau}=\pr_{\dom}(\tau),$ then $d(\hat{\tau},z)\lesssim
\eta\rho(\tau)$ and by lemma~\ref{lm:QM_est1}
\begin{equation*}
d(\tau,w)\lesssim \rho(\tau)+d(\hat{\tau},w)\lesssim
\rho(\tau)+d(\hat{\tau},z) +d(z,w) \lesssim \rho(\tau) +d(z,w).
\end{equation*}
On the other hand,
\begin{multline*} \rho(\tau) +d(z,w) \lesssim \rho(\tau) +
(d(z,\hat{\tau})+d(\hat{\tau},w))\lesssim
(1+\eta)\rho(\tau)+d(\hat{\tau},w)\\
\lesssim \rho(\tau)+d(\hat{\tau},w) \lesssim d(\tau,w). 
\end{multline*}
\qed
\end{proof}

\section{Area-integral inequality for external Kor\'{a}nyi region
}\label{Area_int} \numberwithin{theorem}{section}

Let $\O\subset\Cn$ be a strongly convex domain and $\eta,\eps>0$. For
function $g\in L^1(\dom)$ and $l\in\N$ we define a function
\begin{equation} \label{eq:area_int}
I_l(g,z) = \left(\ \intl_{D^e(z,\eta,\eps)} \abs{\ \intl_\dom \frac{ g(w)
dS(w)}{\V{\tau}{w}^{n+l}} }^2 d\nu_l(\tau) \right)^{1/2},
\end{equation}
where $dS(w) =\cn
\partial\rho(w)\wedge(\bar{\partial}\partial\rho(w))^{n-1}$ (see (\ref{CLF})) and  $d\nu_l(\tau) = \frac{d\mu(\tau)}{\rho(\tau)^{n-2l-1}}.$

\begin{theorem}\label{thm:area_int}
Let $\O$ be strongly convex domain and $g\in L^p(\dom),\
1<p<\infty,$ Then
\begin{equation} \label{est:area_int}
    \intl_\dom I_l(g,z)^p d\sigma(z)  \lesssim \intl_\dom \abs{g(z)}^p
d\sigma(z).
\end{equation}

\end{theorem}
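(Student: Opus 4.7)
The plan is to derive a pointwise bound $I_l(g,z) \lesssim (Mg)(z)$ on $\dom$, where $Mg$ denotes the Hardy-Littlewood maximal function of $g$ on the space of homogeneous type $(\dom, d, \sigma)$. Since $M$ is bounded on $L^p(\dom)$ for $1<p<\infty$, the inequality (\ref{est:area_int}) is then immediate. The extra weight $\rho(\tau)^{2l+1}$ built into $d\nu_l$ is what makes such a pointwise domination feasible in this setting, which is not the case for the classical Luzin area function.

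First I would estimate the inner integral
\[F(\tau) := \intl_\dom \frac{g(w)\, dS(w)}{\V{\tau}{w}^{n+l}}\]
for $\tau \in D^e(z,\eta,\eps)$. By Lemma~\ref{lm:QM_est2}, $|\V{\tau}{w}| \asymp \rho(\tau) + d(z,w)$. Decompose $\dom$ into $A_0 = B(z,\rho(\tau))$ and $A_k = B(z,2^k\rho(\tau)) \setminus B(z, 2^{k-1}\rho(\tau))$ for $k \geq 1$. Then $|\V{\tau}{w}| \gtrsim 2^k \rho(\tau)$ on $A_k$, and $\sigma(B(z,r)) \asymp r^n$ gives
\[\intl_{A_k} \frac{|g(w)|\, dS(w)}{|\V{\tau}{w}|^{n+l}} \lesssim \frac{\sigma(B(z,2^k\rho(\tau)))}{(2^k\rho(\tau))^{n+l}}\,(Mg)(z) \lesssim \frac{2^{-kl}}{\rho(\tau)^l}(Mg)(z).\]
Summing over $k$ (the series $\suml_{k\geq 0} 2^{-kl}$ converges since $l\geq 1$) yields the pointwise bound $|F(\tau)| \lesssim \rho(\tau)^{-l}(Mg)(z)$.

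Substituting into (\ref{eq:area_int}) and using $d\nu_l(\tau) = \rho(\tau)^{2l+1-n}\, d\mu(\tau)$,
\[I_l(g,z)^2 \lesssim (Mg(z))^2 \intl_{D^e(z,\eta,\eps)} \frac{d\mu(\tau)}{\rho(\tau)^{n-1}}.\]
The integrand depends only on $\rho(\tau)$, so the second integration rule for external Kor\'anyi regions reduces the remaining integral to $\intl_0^\eps t^{-(n-1)}\, t^n\, dt = \intl_0^\eps t\, dt \asymp \eps^2$, which is bounded. Hence $I_l(g,z) \lesssim (Mg)(z)$ pointwise on $\dom$, and (\ref{est:area_int}) follows from the $L^p$-boundedness of $M$ on the space of homogeneous type $(\dom, d, \sigma)$.

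I expect the only delicate step to be the dyadic summation: it relies on $l \geq 1$, so that the kernel order $n+l$ strictly exceeds the homogeneous dimension $n$ and $\suml_k 2^{-kl} < \infty$. The exponent in $d\nu_l$ is precisely tuned so that the factor $\rho^{-2l}$ produced by $|F|^2$ is cancelled, leaving the just-integrable weight $\rho^{-(n-1)}$. Any extension to the endpoints $p = 1$ or to $\BMO$ (cf.~Theorem~\ref{thm:area_int_BMO}) would demand a genuinely finer argument than this elementary maximal domination.
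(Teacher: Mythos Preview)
Your argument relies on the exponent in the definition of $d\nu_l$ exactly as printed in~(\ref{eq:area_int}), namely $d\nu_l(\tau)=\rho(\tau)^{-(n-2l-1)}\,d\mu(\tau)$. That exponent is a typo: throughout the rest of the paper (the decomposition~(\ref{eq:Luzin_decomposition}), the proof of Lemma~\ref{lm:T1_1}, the text defining $d\nu_l$ on $D_0$, etc.) the weight used is $\rho(\tau)^{-(n-2l+1)}$. This is also the exponent forced by comparison with the internal inequality~(\ref{ineq:Luzin_internal}), which for $l=1$ has weight $(-\rho)^{-(n-1)}=(-\rho)^{-(n-2l+1)}$, and by the one-variable Luzin integral, where $n=l=1$ and the weight is $\rho^{0}$.

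With the correct exponent your computation becomes
\[
I_l(g,z)^2\;\lesssim\;(Mg(z))^2\intl_{D^e(z,\eta,\eps)}\rho(\tau)^{-2l}\,\rho(\tau)^{-(n-2l+1)}\,d\mu(\tau)
\;\asymp\;(Mg(z))^2\intl_0^\eps t^{-(n+1)}\,t^{n}\,dt
=(Mg(z))^2\intl_0^\eps\frac{dt}{t},
\]
which diverges. The logarithmic divergence is exactly the signature of a singular integral: the kernel $\langle\partial\rho(\tau),\tau-w\rangle^{-(n+l)}$, when measured in the $L^2(D_0,d\nu_l)$ norm, has size $d(z,w)^{-n}$ (this is the content of estimate~(\ref{KZ1}) in Lemma~\ref{lm:T1_1}), so taking absolute values inside the $w$-integral throws away indispensable cancellation. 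A pointwise bound $I_l(g,z)\lesssim Mg(z)$ cannot hold for the correctly weighted operator, for the same reason it fails for the classical Luzin area function.

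The paper therefore treats $I_l$ as a vector-valued Calder\'on--Zygmund operator: after the localization~(\ref{eq:Luzin_decomposition}), each piece $T_j$ has an operator-valued kernel $K_j:\dom\times\dom\to L^2(D_0,d\nu_l)$ satisfying standard size and regularity estimates~(\ref{KZ1})--(\ref{KZ3}), and the $L^p$ boundedness is obtained from the $T1$-theorem of Hyt\"onen--Weis by verifying $T_j1,\,T_j'1\in L^\infty$ and weak boundedness (Lemmas~\ref{lm:T1_pre}--\ref{lm:T1_3}). Your maximal-function shortcut only goes through for the misprinted weight, which corresponds to an operator with an extra smoothing factor $\rho(\tau)^2$.
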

Note that in the one-variable case the integral~(\ref{eq:area_int})
is a holomorphic function and the result of the theorem follows
from~\cite{D77}.

Recall that (semi)norm in $\BMO=\BMO(\dom)$ is defined by 
$$\norm{f} = \sup\frac{1}{\sigma(B)} \int\limits_{B}\abs{f-f_{B}}d\sigma,$$
where $f_B = \sup\frac{1}{\sigma(B)} \int\limits_{B} f d\sigma $ is the average value of $f$ on the quasiball $B$ and the supremum is taken over all quasiballs $B\subset\dom.$ We prove that operator $I_l$ is also bounded on $\BMO.$

\begin{theorem}\label{thm:area_int_BMO}
Let $\O$ be strongly convex domain then
\begin{equation} \label{est:area_int_BMO}
    \norm{I_l(g)}_{\BMO} \lesssim \norm{g}_{\BMO}.
\end{equation}
\end{theorem}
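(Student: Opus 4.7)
The plan is to follow the classical BMO-boundedness scheme for Calder\'on--Zygmund type operators, adapted to the sublinear square-function structure of $I_l$. Fix a quasiball $B=B(z_0,\delta)\subset\dom$ and set $B^*=B(z_0,K\delta)$ for a sufficiently large universal constant $K$. Decompose
$$g=g_B+g_1+g_2,\qquad g_1:=(g-g_B)\chi_{B^*},\qquad g_2:=(g-g_B)\chi_{\dom\setminus B^*},$$
and introduce the linear auxiliary map $H_f(\tau):=\int_\dom f(w)\,dS(w)/\V{\tau}{w}^{n+l}$, so that $I_l(f,z)=\norm{H_f}_{L^2(D^e(z,\eta,\eps),\,d\nu_l)}$. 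Take the centering constant $c_B:=I_l(g_B+g_2,z_0)$. The reverse triangle inequality for the $L^2$-norm, applied through the linearity of $f\mapsto H_f$, yields
$$|I_l(g,z)-c_B|\leq I_l(g_1,z)+|I_l(g_B+g_2,z)-I_l(g_B+g_2,z_0)|.$$

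The local part is handled by Theorem~\ref{thm:area_int} with $p=2$ together with the Cauchy--Schwarz inequality:
$$\frac{1}{\sigma(B)}\int_B I_l(g_1,z)\,d\sigma(z)\leq\left(\frac{1}{\sigma(B)}\int_{B^*}|g-g_B|^2\,d\sigma\right)^{1/2}\lesssim\norm{g}_{\BMO},$$
using the doubling of $\sigma$ on the homogeneous space $\{\dom,d,\sigma\}$ and the elementary bound $|g_{B^*}-g_B|\lesssim\norm{g}_{\BMO}$.

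For the non-local difference, set $h:=g_B+g_2$. Since $H_h$ does not depend on $z$, the elementary inequality $|a^{1/2}-b^{1/2}|^2\leq|a-b|$ for $a,b\geq 0$ gives
$$|I_l(h,z)-I_l(h,z_0)|^2\leq\int_{D^e(z)\triangle D^e(z_0)}|H_h(\tau)|^2\,d\nu_l(\tau).$$
I would then estimate $|H_{g_2}(\tau)|$ pointwise using Lemma~\ref{lm:QM_est2}, which yields $|\V{\tau}{w}|\asymp\rho(\tau)+d(z_0,w)$ uniformly for $\tau\in D^e(z,\eta,\eps)$ with $z\in B$, and decompose $\dom\setminus B^*$ into dyadic rings $A_k$ of $d$-radius $2^kK\delta$ around $z_0$. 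The standard BMO estimates $|g_{A_k}-g_B|\lesssim k\norm{g}_{\BMO}$ and $\sigma(A_k)^{-1}\int_{A_k}|g-g_{A_k}|\,d\sigma\lesssim\norm{g}_{\BMO}$, summed geometrically, should give $|H_{g_2}(\tau)|\lesssim\norm{g}_{\BMO}/\max(\rho(\tau),\delta)^l$ up to a logarithmic factor, and the second integration rule $\int F(\rho(\tau))\,d\nu_l(\tau)\asymp\int_0^\eps\tilde F(t)\,t^{2l+1}\,dt$ turns this into a contribution of order $\norm{g}_{\BMO}$.

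The hardest part will be handling the constant piece $g_B H_1(\tau)$ inside $H_h$: because $|g_B|$ is not controlled by $\norm{g}_{\BMO}$, the crude bound $g_B^2\int_{D^e(z)\triangle D^e(z_0)}|H_1|^2\,d\nu_l\lesssim g_B^2\delta^2$ is not enough. The resolution should exploit that, for $K$ chosen large, the deeper portions $D^e(z,\eta,\eps)\cap\{\rho\gtrsim\delta\}$ and $D^e(z_0,\eta,\eps)\cap\{\rho\gtrsim\delta\}$ agree up to a mild widening of the aperture $\eta$, so the symmetric difference is confined to the thin shell $\{\rho(\tau)\lesssim\delta\}$ on which $D^e(z)$ and $D^e(z_0)$ are approximate translates of each other; a refined analysis of the $z$-dependence of $\int_{D^e(z)\cap\{\rho\lesssim\delta\}}|H_1|^2\,d\nu_l$ should exhibit a cancellation of order beyond the naive $\delta^2$, absorbing the $g_B$-factor into $\norm{g}_{\BMO}$ and completing the proof.
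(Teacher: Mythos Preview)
Your treatment of $g_1$ (local, via Theorem~\ref{thm:area_int}) and $g_2$ (far, via Lemma~\ref{lm:QM_est2} and dyadic rings) is essentially the paper's, and is correct. The gap is precisely where you flag it: the constant piece $g_B$. The geometric claim that the symmetric difference $D^e(z)\triangle D^e(z_0)$ is confined to $\{\rho\lesssim\delta\}$ is false: for $\rho(\tau)=t\gg\delta$ the two cones are approximate translates of each other by a complex-tangential vector of length $\sim\delta^{1/2}$, so at every level the symmetric difference is a thin shell that persists all the way up to $\rho=\eps$. More fundamentally, \emph{any} estimate of the form $g_B^{\,2}\cdot(\text{positive power of }\delta)$ is useless, because $|g_B|$ is not controlled by $\|g\|_{\BMO}$; no ``cancellation of order beyond $\delta^2$'' can rescue this. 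The inequality $|a^{1/2}-b^{1/2}|^2\le|a-b|$ discards exactly the cancellation that makes constants harmless, and there is no way to recover it while working directly with the $z$-dependent region $D^e(z)$.

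The paper's resolution is to remove the $z$-dependence of the region altogether. Via the biholomorphic changes of variables $\varphi_j(z,\cdot)$ of Theorem~\ref{thm:rhostandart} and the decomposition~(\ref{eq:Luzin_decomposition}), $I_l$ is dominated by a finite sum of genuinely \emph{linear} operators $T_j$ mapping $L^p(\dom)$ into $L^p(\dom,L^2(D_0,d\nu_l))$, where $D_0$ is a fixed model region. For a linear $T_j$ the constant piece becomes $b_1\,T_j(1)(z)$, and the key Lemma~\ref{lm:BMO} shows (using the Stokes-theorem identity behind Lemma~\ref{lm:T1_pre}) that $z\mapsto T_j(1)(z)$ is H\"older continuous of order~$\tfrac12$ with values in $L^2(D_0,d\nu_l)$. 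Combined with the John--Nirenberg bound $|b_1|\lesssim\|g\|_{\BMO}\log(1/\delta)$, this yields $\|b_1T_j(1)(z)-b_1T_j(1)(\xi)\|\lesssim\|g\|_{\BMO}\,\delta^{1/2}\log(1/\delta)\lesssim\|g\|_{\BMO}$ for $z,\xi\in B$. The remaining pieces $b_2,b_3$ are then handled by $L^2$-boundedness and the kernel smoothness~(\ref{KZ2}), exactly parallel to your $g_1,g_2$. In short, the missing idea is the linearisation via Theorem~\ref{thm:rhostandart}; without it the constant piece cannot be controlled.
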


The main idea of proof of these theorems is that the operator $I_l$ could be considered as a sum of operators with values in some model $L^2$-space on function (see  decomposition~(\ref{eq:Luzin_decomposition}) and formula~(\ref{eq:kernels}) for kernel).
\begin{definition} Assume, that defining function~$\rho$ for strongly convex domain $\O$
has the following form near $0\in\dom$
\begin{equation} \label{eq:rhostandart}
 \rho(z) = 2\RE(z_n) + \suml_{j,k=1}^n A_{jk} z_j \bar{z}_k + O(\abs{z}^3)
\end{equation}
with positive definite form $A_{jk} z_j \bar{z}_k.$ We define a set
\begin{multline}
 D_0(\eta,\eps) = \{ \tau\in\CO : \abs{\tau_1}^2+\ldots+\abs{\tau_{n-1}}^2 < \eta \RE(\tau_n),\\ \abs{\IM(\tau_n)} <\eta\RE(\tau_n),\ \abs{\RE(\tau_n)}<\eps\}.
\end{multline}
\end{definition}

\begin{lemma}\label{lemma:rhostandart}
 Suppose, that $\rho$ has the form (\ref{eq:rhostandart}). There exist constants $c,\eps_0>0$ such that
    \begin{equation*} 
        D^e(0,\eta,\eps)\subset D_0(c\eta,c\eps),\ D_0(\eta,\eps)\subset D^e(0,c\eta,c\eps)\ \text{for}\ 0<\eta,\eps<\eps_0.
    \end{equation*}
\end{lemma}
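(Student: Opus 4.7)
The plan is to use the Taylor expansion~(\ref{eq:rhostandart}) to show that $\rho(\tau)\asymp\RE(\tau_n)$ on the two regions, after which the defining inequalities of $D^e(0,\eta,\eps)$ and $D_0(\eta,\eps)$ match up to constants.

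Write $\rho(\tau)=2s+Q(\tau)+R(\tau)$ with $s=\RE(\tau_n)$, $Q(\tau)=\suml A_{jk}\tau_j\bar\tau_k$ positive definite (so $\lambda\abs{\tau}^2\leq Q(\tau)\leq\Lambda\abs{\tau}^2$), and $\abs{R(\tau)}\leq C\abs{\tau}^3$. The defining conditions immediately yield
\begin{align*}
\tau\in D^e(0,\eta,\eps) &:\quad \abs{\tau}^2 \leq \eta\rho(\tau)+s^2+\eta^2\rho(\tau)^2,\\
\tau\in D_0(\eta,\eps) &:\quad \abs{\tau}^2 \leq (\eta+\eps+\eta^2\eps)\,s.
\end{align*}

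First take $\tau\in D^e(0,\eta,\eps)$. After restricting $\eps_0$ so that $\abs{\tau}\leq r_1$ for some $r_1\leq\min\{\lambda/(2C),1/\Lambda\}$ (the locality step), one has $Q+R\geq(\lambda/2)\abs{\tau}^2\geq 0$, so $\rho\geq 2s$, i.e.\ $s\leq\rho/2$. The opposite estimate $\rho\leq 2s+\Lambda'\abs{\tau}^2$ with $\Lambda'=\Lambda+Cr_1$, combined with the bound on $\abs{\tau}^2$, rearranges (for $\eta,\eps_0$ small enough to absorb the $\eta\rho$ and $\eta^2\rho^2$ contributions) to $\rho/2\leq 2s+\Lambda' s^2$. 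The root analysis of this quadratic, together with $\abs{s}\leq r_1<2/\Lambda'$, excludes the branch $s\leq-2/\Lambda'$ and forces $s\geq\rho/4$. Hence $s\asymp\rho$ on $D^e(0,\eta,\eps)$, and substituting into the defining inequalities places $\tau$ in $D_0(c\eta,c\eps)$ for an appropriate $c$.

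The reverse inclusion is easier: for $\tau\in D_0(\eta,\eps)$ the displayed estimate gives $\abs{\tau}^2\lesssim(\eta+\eps)s$, so $\abs{\tau}$ is automatically small and the Taylor expansion yields $\rho(\tau)=2s+O((\eta+\eps)s)\asymp s$. Plugging this into the conditions defining $D_0$ produces those of $D^e(0,c\eta,c\eps)$.

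The main obstacle is the locality step in the first direction. A priori $\RE(\tau_n)$ is unconstrained in the definition of $D^e(0,\eta,\eps)$, and "far" exterior points whose projection to $\dom$ lies away from $0$ must be excluded before the Taylor expansion at $0$ can be used. This calls on strong convexity of $\O$ -- in particular, the tangent hyperplane $T_0$ meets $\overline\O$ only at $0$, so for $\eps_0$ sufficiently small the exterior points satisfying $\rho(\tau)<\eps_0$ and the tangential/imaginary-normal constraints of $D^e$ are confined to a small neighbourhood of $0$. Once $\abs{\tau}$ is controlled, the remaining estimates are purely algebraic.
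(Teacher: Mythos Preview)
Your strategy---reducing both inclusions to the comparison $\rho(\tau)\asymp\RE(\tau_n)$ via the Taylor expansion~(\ref{eq:rhostandart})---is exactly the paper's, and the algebraic manipulations you outline (absorbing the $\eta\rho$ and $\eta^2\rho^2$ contributions to obtain $\rho\lesssim\RE(\tau_n)$, and using positivity of the Hermitian part for $2\RE(\tau_n)\le\rho$) match the paper's proof closely. You are also right to single out the locality of $D^e(0,\eta,\eps)$ as the only non-routine step; the paper disposes of it with the one-line ``it is easy to see that $|\tau|\to 0$ when $\rho(\tau)\to 0,\ \tau\in D^e(0,\eta,\eps)$''.

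The justification you give for locality, however, does not work as written. The fact that the complex tangent space $T_0=\{z_n=0\}$ meets $\overline\O$ only at the origin is not the relevant geometry: the constraints of $D^e(0,\eta,\eps)$ force $\tau$ to lie near the \emph{real normal line} $\{(0,\dots,0,s):s\in\R\}$, not near $T_0$. That line meets $\dom$ at a second point $p=(0,\dots,0,s^*)$ with $s^*<0$ (convexity gives $\overline\O\subset\{\RE z_n\le 0\}$), and the points $(0,\dots,0,s)$ with $s$ slightly below $s^*$ satisfy every defining inequality of $D^e(0,\eta,\eps)$ while having $\RE(\tau_n)\approx s^*<0$; such points lie outside $D_0(c\eta,c\eps)$ for every $c>0$. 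So neither your $T_0$ argument nor the paper's ``easy to see'' actually excludes this antipodal piece, and the inclusion $D^e(0,\eta,\eps)\subset D_0(c\eta,c\eps)$ must be read as a statement about the component of $D^e$ near the origin. With that local reading---which is how the lemma is used after the partition of unity in Theorem~\ref{thm:rhostandart} and~(\ref{eq:Luzin_decomposition})---your algebra and the paper's coincide.
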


\begin{proof} For the function
$\rho$ of the form~(\ref{eq:rhostandart}) the Kor\'{a}nyi sector (\ref{df:KoranyExt})
could be expressed as follows
\begin{multline*}
D^e(0,\eta,\eps) =  \{\tau\in \CO: \abs{\tau_1}^2+\ldots+\abs{\tau_{n-1}}^2\leq \eta\rho(\tau),\\ \abs{\IM(\tau_n)}\leq\eta\rho(\tau),\ \rho(\tau)<\eps\}
\end{multline*}
and
\begin{multline*}
\rho(\tau)\leq 2\RE(\tau_n) + c_0\left( \abs{\tau_1}^2+\ldots+\abs{\tau_{n-1}}^2 + \IM(\tau_n)^2 + \RE(\tau_n)^2\right)\\
\leq  (2+ c_0 \RE(\tau_n))\RE(\tau_n) + c_0(1+ \eta\rho(\tau))\eta\rho(\tau), \ \tau\in D^e(0,\eta,\eps) .
\end{multline*}
Thus for $\eta<\eta_0=\frac{1}{8c_0}$ we have $\rho(\tau)\leq c \RE(\tau_n).$

It is easy to see, that $|\tau|\to 0$ when $\rho(\tau)\to 0, \tau\in D^e(0,\eta,\eps).$  Then by convexity of $\O$
$$2\RE(\tau_n) =  \rho(\tau) - \suml_{j,k=1}^n A_{jk} \tau_j \bar{\tau}_k + O(\abs{\tau}^3)\leq \rho(\tau),\ \tau\in D^e(0,\eta,\eps_0) $$
for some $\eps_0\in(0,\eta_0).$

Finally $D^e(0,\eta,\eps)\subset D_0(c\eta,\eps)$ and analogously $D_0(\eta,\eps)\subset D^e(0,\eta,\eps)$ for $0<\eta,\eps<\eps_0.$ \qed

\end{proof}

\begin{theorem} \label{thm:rhostandart}
There exists such covering of the set 
$\overline{\O}_\eps\setminus\O_{-\eps}$ by open sets $\Gamma_j$ such
that for every $\xi\in\Gamma_j$ we can find a holomorphic change of
coordinates $\varphi_j(\xi,\cdot) : \Cn \to \Cn $ such that
\begin{enumerate}
    \item[{\rm{1.}}] The mapping $\varphi_j(\xi,\cdot)$
    transforms function $\rho$ to the type (\ref{eq:rhostandart})
    and could be expressed as follows
    \begin{equation}\label{thm:rhostandart:cond1}
    \varphi_j(\xi,z) = \Phi_j(\xi) (z-\xi) + (z-\xi)^\perp B_j(\xi) (z-\xi) e_n,
    \end{equation}
    where matrices  $\Phi_j(\xi), B_j(\xi)$ are $C^1$-smooth on
    $\Gamma_j,$ and $e_n=(0,\ldots,0,1).$
    \item[{\rm{2.}}] Let $\psi_j(\xi,\cdot)$ be an inverse map of $\varphi_j(\xi,\cdot),$ and let $J_j(\xi,\cdot)$ be a complex Jacobian of $\psi_j$.
    Then
    \begin{align} \label{thm:rhostandart:cond2}
     \sup\limits_{\tau\in\O_\eps\setminus\overline{\O}_\eps}
     \abs{J_j(\xi,\cdot) - J_j(\xi',\cdot)} &\lesssim \abs{\xi-\xi'},\\
     \sup\limits_{\tau\in\O_\eps\setminus\overline{\O}_\eps}
     \abs{\psi_j(\xi,\cdot) - \psi_j(\xi',\cdot)} &\lesssim \abs{\xi-\xi'}.\
    \end{align}
    Note that real Jacobian is then equal to $\abs{J_j(\xi,\cdot)}^2 = J_j(\xi,\cdot)\overline{J_j(\xi,\cdot)}.$
    \item[{\rm{3.}}] There exist constants $c,\eps_0>0$
    such that for $0<\eta,\eps<\eps_0$
    \begin{equation} \label{thm:rhostandart:cond3}
        \varphi_j(\xi,D^e(\xi,\eta,\eps))\subseteq D_0(c\eta,c\eps),\quad \psi_j(\xi,D_0(\eta,\eps)\subseteq
        D^e(\xi,c\eta,c\eps).
    \end{equation}
\end{enumerate}
\end{theorem}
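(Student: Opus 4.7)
The plan is to construct $\varphi_j(\xi,\cdot)$ from the second-order Taylor expansion of $\rho$ at $\xi$, using a linear piece to align the complex normal with $e_n$ and a quadratic correction in the $e_n$-direction to absorb the holomorphic Hessian. For fixed $\xi\in\overline\Omega_\eps\setminus\Omega_{-\eps}$, write
\[
\rho(\xi+h)-\rho(\xi) = 2\RE\scp{\partial\rho(\xi)}{h} + \RE\scp{\partial^2\rho(\xi)h}{h} + \scp{L(\xi)h}{h} + O(|h|^3),
\]
where $L(\xi)$ is the Levi form and $\partial^2\rho(\xi)=[\partial^2\rho/\partial z_j\partial z_k]$ the holomorphic Hessian. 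Defining the last new coordinate by
\[
w_n=\scp{\partial\rho(\xi)}{h}+\tfrac12\scp{\partial^2\rho(\xi)h}{h}
\]
makes $2\RE(w_n)$ absorb both the linear normal term and the entire holomorphic-symmetric quadratic term, leaving only a Hermitian (Levi) form and a cubic remainder, which is exactly the shape (\ref{eq:rhostandart}).

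To make the construction smooth in $\xi$, I would cover $\overline\Omega_\eps\setminus\Omega_{-\eps}$ by finitely many open sets $\Gamma_j$ on each of which a $C^1$ family $v_1(\xi),\ldots,v_{n-1}(\xi)$ spanning a complement to $\partial\rho(\xi)$ exists, obtained for instance by Gram--Schmidt from a fixed basis at a base point $\xi_j^0\in\Gamma_j$. Setting $\Phi_j(\xi)$ to be the matrix with rows $v_1(\xi),\ldots,v_{n-1}(\xi),\partial\rho(\xi)$ and choosing $B_j(\xi)$ to realize $\tfrac12\partial^2\rho(\xi)$ in the new linear coordinates then puts $\varphi_j(\xi,z)$ into the prescribed form (\ref{thm:rhostandart:cond1}); compactness gives a finite cover, and since $\rho\in C^3$ both $\Phi_j$ and $B_j$ are $C^1$-smooth in $\xi$.

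Condition (\ref{thm:rhostandart:cond2}) then follows from a direct inversion. The first $n-1$ components of $\varphi_j(\xi,\cdot)$ are linear in $z-\xi$ and the last component is linear plus a scalar holomorphic quadratic form, so inverting $\Phi_j(\xi)$ on the first $n-1$ coordinates and solving a quadratic equation (equivalently, applying the implicit function theorem near $z=\xi$) yields a unique $C^1$ branch $\psi_j(\xi,\cdot)$ with $\psi_j(\xi,0)=\xi$. Uniform $C^1$ bounds on $\Phi_j^{-1}$ and $B_j$ over the compact set $\Gamma_j$ then transfer to the required Lipschitz estimates for $\psi_j$ and the Jacobian $J_j$.

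The hard part will be condition (\ref{thm:rhostandart:cond3}). Lemma~\ref{lemma:rhostandart} already handles the model situation at the origin, so applying it to the transformed defining function $\widetilde\rho(w)=\rho(\psi_j(\xi,w))$, which by construction has the standard form~(\ref{eq:rhostandart}) at $w=0$, reduces the problem to comparing $\varphi_j(\xi,D^e(\xi,\eta,\eps))$ with the external Kor\'anyi region of $\widetilde\rho$ at the origin. The key point, guaranteed by our choice of $\Phi_j$, is that the linear part of $\varphi_j(\xi,\cdot)$ sends the complex tangent space $T_\xi$ to $\{w_n=0\}$ and the complex normal $n(\xi)$ to a positive multiple of $e_n$. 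The quadratic correction perturbs only the $e_n$-component by $O(|z-\xi|^2)=O(\rho(\tau))$, which is absorbed by the $\rho(\tau)$-scaling in the definition of $D^e$; provided $\eps_0$ is chosen small enough for this perturbation to be dominated by the defining Kor\'anyi inequalities, both inclusions in~(\ref{thm:rhostandart:cond3}) follow after adjusting the constant $c$.
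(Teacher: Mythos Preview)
Your proposal is correct and follows essentially the same route as the paper: a linear change $\Phi_j(\xi)(z-\xi)$ aligning $\partial\rho(\xi)$ with $e_n$, followed by a quadratic correction in the $e_n$-component to kill the holomorphic Hessian, with the covering $\{\Gamma_j\}$ chosen so that $\Phi_j$ varies smoothly; condition~3 is then reduced to Lemma~\ref{lemma:rhostandart}. If anything, your write-up is more detailed than the paper's (which dispatches condition~2 with ``Easily, the second condition also holds'' and condition~3 with ``follows immediately from lemma~\ref{lemma:rhostandart}''), and your remark that the quadratic piece perturbs only the $e_n$-component by $O(|z-\xi|^2)$ is exactly the point one needs to pass from $\varphi_j(\xi,D^e(\xi,\eta,\eps))$ to the model region where the lemma applies.
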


\begin{proof}
Let $\xi\in\dom,$ by linear change of coordinates $z' =
(z-\xi)\Phi(\xi)$ we could obtain the following form for function
$\rho$ 
\begin{multline*}
\rho(z) = \rho(\xi+ \Phi^{-1}(\xi)z')\\ = 2\RE(z'_n) + \suml_{j,k=1}^n A^1_{jk}(\xi) z'_j\bar{z}'_k  + \RE\suml_{j,k=1}^n A_{jk}^2(\xi) z'_j z'_k  +O(\abs{z'}^3).
\end{multline*}
Setting $z''_n =z'_n +  A_{jk}^2 z'_j z'_k$ and $z''_j = z'_j,\
1\leq j\leq n-1,$ we have (see \cite{Ra86})
$$\rho(z'')= 2\RE(z'_n) + \suml_{j,k=1}^n A^1_{jk}(\xi) z''_j\bar{z}''_k   +O(\abs{z''}^3).  $$
Denote $B(\xi) = \Phi(\xi)^\perp A^2(\xi) \Phi(\xi),$ then
$$\varphi(\xi,z) = \Phi(\xi) (z-\xi) + (z-\xi)^\perp B(\xi)
(z-\xi) e_n.$$

We choose $\Gamma_j$ such that the matrix $\Phi(\xi)$ could be
defined on $\Gamma_j$ smoothly, this choice we denote as $\Phi_j,$
and the change corresponding to this matrix as $\varphi_j$
$$\varphi_j(\xi,z) = \Phi_j(\xi) (z-\xi) + (z-\xi)^\perp B_j(\xi)
(z-\xi)e_n.$$ Thus mappings $\varphi_j$ satisfy the first condition.
Easily, the second condition also holds.

The last condition (\ref{thm:rhostandart:cond3}) follows immediately from lemma~\ref{lemma:rhostandart}. This ends the
proof of the theorem. \qed
\end{proof}

Further we will assume, that the covering
$\overline{\O}_\eps\setminus\O_{-\eps} \subset
\bigcup\limits_{j=1}^N\Gamma_j$ and maps $\varphi_j, \psi_j$ are
chosen by the theorem~\ref{thm:rhostandart}. For covering
$\{\Gamma_j\}$ we consider a smooth decomposition of identity on
$\dom:$
$$\chi_j \in C^{\infty}(\Gamma_j),\ 0\leq\chi_j\leq 1,\ \supp{\chi_j}\subset\Gamma_j,\ \suml_{j=1}^N \chi_j(z) = 1,\ z\in\dom.$$

Fix parameters $0<\eps,\eta<\eps_0,$ denote $D_0 =D_0(\eta,\eps).$ Then by
(\ref{thm:rhostandart:cond3}) 
$$D^e(z)=\varphi_j(z,D^e(z,\eta/c,\eps/c))\subset D_0$$ and
\begin{multline} \label{eq:Luzin_decomposition}
I_l (g,z)^2\\ = \suml_{j=1}^N \chi_j(z)
\intl_{D^e(z)} \abs{\ \intl_\dom \frac{
g(w)J_j(z,\tau) dS(w)}{\V{\psi_j(z,\tau)}{w}^{n+l}} }^2
\frac{d\mu(\tau)}{\RE(\tau_n)^{n-2l+1}} \\
 \lesssim \suml_{j=1}^N  \intl_{D_0} \abs{\ \intl_\dom \frac{ g(w)\chi_j^{1/2}(z) J_j(z,\tau)
dS(w)}{\V{\psi_j(z,\tau)}{w}^{n+l}} }^2
\frac{d\mu(\tau)}{\RE(\tau_n)^{n-2l+1}}.
\end{multline}

\noindent We will consider the function
\begin{equation} \label{eq:kernels}
 K_j(z,w) (\tau) = \frac{\chi_j^{1/2}(z)J_j(z,\tau)}{\V{\psi_j(z,\tau)}{w}^{n+1}}
\end{equation}
as a map $\dom\times\dom\to \mathscr{L}(\C,L^2(D_0,d\nu_l)),$ such
that its values are operator of multiplication from $\C$ to
$L^2(D_0,d\nu_l),$ where
$d\nu_l(\tau)=\frac{d\mu(\tau)}{\IM(\tau_n)^{n-2l+1}}$ is a measure
on the region $D_0.$ Throughout the proof of the
theorem~\ref{thm:area_int} $j,l$ will be fixed integers and the norm
of function $F$ in the space $L^2(D_0,d\nu_l)$ will be denoted as~$\norm{F}.$

We will show that integral operator $T_j$ defined by kernel $K_j$ is
bounded on $L^p.$ To prove this we apply $T1$-theorem for
transformations with operator-valued kernels formulated by
Hyt\"{o}nen and Weis in \cite{HW05}, taking in account that in our
case concerned spaces are Hilbert. Some details of the proof are
similar to the proof of the boundedness of operator
Cauchy-Leray-Fantappi\`{e} $K_\O$ for lineally convex domains
introduced in \cite{R12}. Below we formulate the $T1$-theorem,
adapted to our context.

\begin{definition}
We say that the function $f\in C^\infty_0(\dom)$ is a normalized
bump-function, associated with the quasiball $B(w_0,r)$ if
$\supp{f}\subset B(w_0,r),$ $\abs{f}\leq 1,$ and
$$\abs{f(\xi)-f(z)}\leq \frac{d(\xi,z)^{\gamma}}{r^{\gamma}},\ \xi,z\in\dom.$$
The set of bump-functions associated with $B(w_0,r)$ is denoted as
$A(\gamma,w_0,r).$
\end{definition}

\begin{theorem} \label{thm:T1}
Let $K:\dom\times\dom\to \mathscr{L} (\C, L^2(D_0,d\nu_l))$ verify
the estimates
\begin{align}
    &\norm{K(z,w)} \lesssim \frac{1}{d(z,w)^n}; \label{KZ1}\\
    &\norm{K(z,w)-K(\xi,w)} \lesssim \frac{d(z,\xi)^\gamma}{d(z,w)^{n+\gamma}},\quad d(z,w)> C d(z,\xi); \label{KZ2}\\
    &\norm{K(z,w)-K(z,w')} \lesssim \frac{d(w,w')^\gamma}{d(z,w)^{n+\gamma}},\quad d(z,w)> C d(w,w')\label{KZ3}
\end{align}
for $\xi,z,w\in\dom$ and some constant $C>0.$

Assume that operator $T:
\mathscr{S}(\dom)\to\mathscr{S}'(\dom,\mathscr{L} (\C,
L^2(D_0,d\nu_l)))$ with kernel $K$ verify the following conditions.
\begin{itemize}
 \item $T1,\ T'1\in\BMO(\dom,L^2(D_0,d\nu_l)),$ where $T'$ is
 formally adjoint operator.
 \item Operator $T$ satisfies the weak boundedness property, that is
 for every pair of normalized bump-functions $f,g\in A(\gamma,w_0,r)$ we have $$\norm{\scp{g}{Tf}}\leq C
 r^{-n}.$$
\end{itemize}
Then $T\in\mathscr{L} (L^p(\dom), L^p(\dom,L^2(D_0,d\nu_l))$ for
every $p\in (1,\infty).$
\end{theorem}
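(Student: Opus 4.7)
The plan is to follow the David--Journ\'e $T1$-theorem strategy, adapted to operator-valued kernels with Hilbert-space range (as in \cite{HW05}) and transplanted to the space of homogeneous type $(\dom,d,\sigma)$. The argument splits into two steps: first, $L^2$-boundedness of $T$ from $L^2(\dom)$ to $L^2(\dom,L^2(D_0,d\nu_l))$; second, the extension to $L^p$, $p\neq 2$, by Calder\'on--Zygmund theory.

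For the $L^2$ step I would subtract off the two paraproducts associated with the BMO data. Using a Christ-type dyadic system of quasicubes on $(\dom,d,\sigma)$, which exists since $\dom$ is of homogeneous type, I define for any $b\in\BMO(\dom,L^2(D_0,d\nu_l))$ a paraproduct $\Pi_b:L^2(\dom)\to L^2(\dom,L^2(D_0,d\nu_l))$ satisfying $\Pi_b(1)=b$, $\Pi_b'(1)=0$, and verify $\norm{\Pi_b}\lesssim\norm{b}_{\BMO}$ by the standard Carleson-measure argument, which transfers verbatim because the target is Hilbert. The reduced operator $R:=T-\Pi_{T1}-(\Pi_{T'1})^*$ then satisfies $R(1)=R'(1)=0$, retains kernel bounds of Calder\'on--Zygmund type \eqref{KZ1}--\eqref{KZ3} (up to admissible modifications), and still obeys WBP. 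For such an $R$ I would prove $L^2$-boundedness by a Cotlar--Stein almost-orthogonality argument with a continuous Littlewood--Paley decomposition $I=\int_0^\infty Q_t^2\,dt/t$ on $\dom$: write $R=\int\!\!\int Q_s R Q_t\,ds\,dt/(st)$ and estimate $\norm{Q_s R Q_t}\lesssim\min(s/t,t/s)^\gamma$ using the cancellation $R(1)=R'(1)=0$, the H\"older estimates \eqref{KZ2}--\eqref{KZ3}, and WBP; integrating Schur-style in $s,t$ gives the desired bound.

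For $L^p$, $p\neq 2$, once $L^2$-boundedness is in hand I would run a Calder\'on--Zygmund decomposition at height $\lambda$ for $f\in L^1(\dom)$: write $f=g+\sum_Q b_Q$ with $g$ essentially bounded and $b_Q$ mean-zero bumps supported on CZ quasiballs. The good part is controlled by $L^2$. For each bad piece, estimate $\norm{Tb_Q(z)}_{L^2(D_0,d\nu_l)}$ off a fixed dilate of $Q$ by subtracting $K(z,w_Q)$ and invoking \eqref{KZ3}; this yields the tail bound needed for weak-$(1,1)$. Marcinkiewicz interpolation then gives $L^p$ for $1<p\le 2$, and duality against the adjoint, which satisfies the same hypotheses with the roles of $T1$ and $T'1$ exchanged, handles $2\le p<\infty$.

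The main obstacle is setting up the Littlewood--Paley / paraproduct apparatus on $(\dom,d,\sigma)$ with Hilbert-valued target and verifying the Carleson-type embedding for $\Pi_b$ when $b\in\BMO(\dom,L^2(D_0,d\nu_l))$. The Hilbert structure of the range is decisive here: orthogonality, the square-function characterization of $\BMO$, and Carleson estimates all transfer without the UMD or type/cotype complications that appear for general Banach targets, so once the homogeneous-type scaffolding is in place the transfer of \cite{HW05} is essentially bookkeeping.
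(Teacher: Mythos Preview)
The paper does not prove this theorem; it is merely \emph{stated} as an adaptation to the present setting of the Hyt\"onen--Weis $T1$-theorem \cite{HW05} (together with the standard theory on spaces of homogeneous type), and is then used as a black box in the proofs of Lemmas~\ref{lm:T1_1}--\ref{lm:T1_3} and Theorem~\ref{thm:area_int}. So there is no ``paper's own proof'' to compare against.

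Your outline is the standard David--Journ\'e route and is essentially the strategy of \cite{HW05} transplanted to a space of homogeneous type: subtract paraproducts built from $T1$ and $T'1$ to reduce to the case $R1=R'1=0$, prove $L^2$-boundedness of $R$ by Cotlar--Stein/almost-orthogonality using WBP and the H\"older bounds \eqref{KZ2}--\eqref{KZ3}, then pass to $L^p$ by Calder\'on--Zygmund decomposition and duality. The Hilbert-valued target indeed removes the UMD/Rademacher-boundedness issues that make the general Banach case in \cite{HW05} delicate, so the sketch is reasonable. Since the paper simply cites the result, your proposal goes well beyond what the paper itself does; it is consistent with the cited source rather than with anything in the text.
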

\medskip

In the following four lemmas we will prove that kernels $K_j$ and
corresponding operators $T_j$ satisfy the conditions of the
$T1$-theorem. In particular, in lemmas~\ref{lm:T1_pre},\ref{lm:T1_3} we prove that 
$T1,T'1\in L^\infty (\dom,L^2(D_0,d\nu_l))\subset\BMO(\dom,L^2(D_0,d\nu_l)).$

\begin{lemma} \label{lm:T1_1} The kernel $K_j$ verify estimates (\ref{KZ1}-\ref{KZ3}).

\end{lemma}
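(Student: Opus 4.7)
My plan is to verify each of (\ref{KZ1})--(\ref{KZ3}) by first extracting a pointwise-in-$\tau$ estimate for $K_j(z,w)(\tau)$ and then collapsing the $L^2(D_0, d\nu_l)$ norm to a one-dimensional integral via the rule $\int_{D_0} F(\RE\tau_n)\,d\mu(\tau) \asymp \int_0^\eps \tilde F(t)\,t^n\,dt$ stated in Section \ref{CLF}. Theorem \ref{thm:rhostandart} supplies the two facts I need at this stage: $J_j(z,\cdot)$ is uniformly bounded, and $\psi_j(z, D_0) \subset D^e(z, c\eta, c\eps)$, whence $\rho(\psi_j(z,\tau)) \asymp \RE\tau_n$. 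Lemma \ref{lm:QM_est2} applied at the point $\psi_j(z,\tau)$ then gives
$$|\V{\psi_j(z,\tau)}{w}| = d(\psi_j(z,\tau), w) \asymp \RE\tau_n + d(z,w),$$
and hence $|K_j(z,w)(\tau)| \lesssim (\RE\tau_n + d(z,w))^{-(n+l)}$. Squaring and applying the reduction rule reduces (\ref{KZ1}) to the elementary bound $\int_0^\eps t^{2l-1}(t + d(z,w))^{-2(n+l)}\,dt \asymp d(z,w)^{-2n}$.

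For (\ref{KZ2}) I would expand $K_j(z,w) - K_j(\xi,w)$ telescopically over the three $z$-dependent factors $\chi_j^{1/2}(z)$, $J_j(z,\tau)$, and $\V{\psi_j(z,\tau)}{w}^{-(n+l)}$. Each of $\chi_j^{1/2}$, $J_j$, and $\psi_j$ is Euclidean-Lipschitz in $z$ by Theorem \ref{thm:rhostandart}, while the difference of denominators is treated through the factorisation
$$\frac{1}{a^{n+l}} - \frac{1}{b^{n+l}} = (b - a)\sum_{k=0}^{n+l-1}\frac{1}{a^{n+l-k}b^{k+1}},$$
with $|a - b| \lesssim |\psi_j(z,\tau) - \psi_j(\xi,\tau)| \lesssim |z - \xi|$. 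Under the hypothesis $d(z,w) \geq C\,d(z,\xi)$, Lemma \ref{lm:QM_est2} together with the quasi-triangle inequality forces $|a|, |b| \asymp \RE\tau_n + d(z,w)$, so repeating the one-dimensional integration (now with the denominator raised by one) yields
$$\|K_j(z,w) - K_j(\xi,w)\| \lesssim \frac{|z-\xi|}{d(z,w)^{n+1}}.$$

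Estimate (\ref{KZ3}) follows from the same template: $w \mapsto \partial\rho(w)$ and $w \mapsto -w$ are smooth, so the denominator again admits an analogous telescoping with Lipschitz remainder in $w$, and the hypothesis $d(z,w) \geq C\,d(w,w')$ keeps the two denominators comparable. The final step, and the main obstacle, is converting the Euclidean Lipschitz form $|z-\xi|/d(z,w)^{n+1}$ (respectively $|w-w'|/d(z,w)^{n+1}$) into the anisotropic quasimetric form $d(z,\xi)^\gamma/d(z,w)^{n+\gamma}$ demanded by the $T1$-theorem. Here I would exploit the elementary inequality $|z-\xi|^2 \lesssim d(z,\xi)$ for $z,\xi \in \dom$ near each other, read off from the standard-form expansion (\ref{eq:rhostandart}), combined with a more delicate decomposition of $\psi_j(z,\tau) - \psi_j(\xi,\tau)$ into a component along $\partial\rho(w)$ (controlled by $d(z,\xi)$ itself, not just its square root) and a transversal component, to recover the required fractional exponent $\gamma \in (0,1)$.
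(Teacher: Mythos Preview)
Your treatment of (\ref{KZ1}) matches the paper, and telescoping over $\chi_j^{1/2}$, $J_j$, and the denominator is also how the paper opens the proof of (\ref{KZ2}). The genuine gap is exactly where you flag it: the crude Lipschitz bound $|a-b|\lesssim|z-\xi|\lesssim d(z,\xi)^{1/2}$ gives, after integration in $\tau$, only
\[
\norm{K_j(z,w)-K_j(\xi,w)}\ \lesssim\ \frac{d(z,\xi)^{1/2}}{d(z,w)^{n+1}},
\]
and this does \emph{not} imply $d(z,\xi)^{1/2}/d(z,w)^{n+1/2}$, since $d(z,w)$ may be arbitrarily small; no choice of $\gamma$ rescues the homogeneity. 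What is actually required is the scale-matched bound
\[
\bigl|\V{\tau_z}{w}-\V{\tau_\xi}{w}\bigr|\ \lesssim\ d(z,\xi)^{1/2}\,\bigl|\V{\tau_z}{w}\bigr|^{1/2},
\]
so that one half-power of the denominator is cancelled \emph{before} integrating. Your proposed fix --- decomposing $\tau_z-\tau_\xi$ along $\partial\rho(w)$ --- points at the wrong normal: since $d(z,\xi)=\abs{\scp{\partial\rho(z)}{z-\xi}}$, it is the normal at $z$ (or $\xi$), not at $w$, that isolates a component of size $d(z,\xi)$; the component of $z-\xi$ along $\partial\rho(w)$ has no reason to be small. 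The paper instead uses the explicit form (\ref{thm:rhostandart:cond1}) to write $\tau_z-\tau_\xi=(z-\xi)+(\Psi(z)-\Psi(\xi))\tau+L(z,\xi,\tau)e_n$ with $|L|\lesssim d(z,\xi)^{1/2}|\tau|$, pairs each piece with $\partial\rho(\tau_\xi)$, and observes that the main contribution $\scp{\partial\rho(\tau_\xi)}{z-\xi}$ equals $\V{\xi}{z}+O(|\tau_\xi-\xi|\,|z-\xi|)$, while the cross-term $\scp{\partial\rho(\tau_z)-\partial\rho(\tau_\xi)}{\tau_z-w}$ is bounded by $|z-\xi|\cdot|\tau_z-w|\lesssim d(z,\xi)^{1/2}d(\tau_z,w)^{1/2}$.

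For (\ref{KZ3}) there is an additional confusion in your sketch: $w$ enters $K_j(z,w)$ only linearly through $\tau_z-w$; the factor $\partial\rho$ is evaluated at $\tau_z$, not at $w$. Hence $a-b=\scp{\partial\rho(\tau_z)}{w'-w}$, and the needed refinement is
\[
\bigl|\scp{\partial\rho(\tau_z)}{w'-w}\bigr|\ \lesssim\ \bigl|\V{\tau_z}{w}\bigr|^{1/2}\,d(w,w')^{1/2}.
\]
The paper obtains this by replacing $\partial\rho(\tau_z)$ with $\partial\rho(\hat\tau)$, $\hat\tau=\pr_{\dom}(\tau_z)$, at cost $\rho(\tau_z)\,|w-w'|$, and then invoking the standard anisotropic inequality $\abs{\scp{\partial\rho(\hat\tau)}{w-w'}}\lesssim d(\hat\tau,w)^{1/2}d(w,w')^{1/2}$ for points of $\dom$. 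That inequality --- not a Euclidean Lipschitz bound --- is what produces $\gamma=\tfrac12$; without it your argument does not close.
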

\begin{proof} By lemma \ref{lm:QM_est2} we have 
$\abs{\V{\tau}{w}}\asymp\rho(\tau) + \abs{\V{z}{w}},$ $z,w\in\dom,\
\tau\in D^e(z,c\eta,c\eps). $
Thus
\begin{multline*}
\norm{ K_j(z,w)}^2 = \intl_{D_0} \abs{K_j(z,w)(\tau)}^2 d\nu_l(\tau)
\lesssim\intl_{D^e(z,c\eta,c\eps)}
\frac{d\nu_l(\tau)}{ \abs{ \V{\tau}{w} }^{2n+2l} } \\
 \lesssim\intl_{D^e(z,c\eta,c\eps)}\frac{1}{( \rho(\tau) + \abs{\V{z}{w}} )^{2n+2l}}
\frac{d\mu(\tau)}{\rho(\tau)^{n-2l+1}}\\
\lesssim \intl_0^\infty\frac{t^{2l-1} dt}{(t+\abs{\V{z}{w}})^{2n+2l}} \lesssim
\frac{1}{\abs{\V{z}{w}}^{2n}}\lesssim \frac{1}{d(z,w)^{2n}}.
 \end{multline*}

Similarly,
\begin{multline*}\norm{K_j(z,w) - K_j(z,w')}^2\\
\lesssim\intl_{D^e(z,c\eta,c\eps)} \abs{\frac{1}{\V{\tau}{w}^{n+l}} -
\frac{1}{\V{\tau}{w'}^{n+l}}}^2 d\nu_l(\tau).
\end{multline*}

%

\noindent Denote $\hat{\tau} =\pr_{\dom}(\tau),$ then
\begin{multline*} \abs{\V{\tau}{w}} \lesssim \rho(\tau) +
\abs{\V{\hat{\tau}}{w}}\\ \lesssim \rho(\tau) +  \abs{\V{z}{w}} +
\abs{\V{\hat{\tau}}{z}} \lesssim \rho(\tau) + \abs{\V{z}{w}},
\end{multline*}
which combined with lemma \ref{lm:QM_est2} and estimate $$d(w,w')=\abs{\V{w}{w'}}<
C \abs{\V{z}{w}}=C d(z,w)$$ implies
\begin{multline*}
\abs{\V{\tau}{w}} \asymp \rho(\tau) +  \abs{\V{z}{w}} \asymp \rho(\tau) +  \abs{ \V{z}{w'} }\\ \asymp \abs{\V{\tau}{w'}}.  
\end{multline*}
Next, we have 
\begin{multline*}
 \abs{ \V{\tau}{w'}-\V{\tau}{w} } =  \abs{ \scp{\partial\rho(\tau)}{\hat{\tau}-w}-\scp{\partial\rho(\tau)}{\hat{\tau}-w'} }\\
  \leq \abs{ \scp{\partial\rho(\tau)-\partial\rho(\hat{\tau})}{w-w'} } +
  \abs{ \scp{\partial\rho(\hat{\tau})}{\hat{\tau}-w}-\scp{\partial\rho(\hat{\tau})}{\hat{\tau}-w'} }\\
  \lesssim \rho(\tau) \abs{\V{w}{w'}}^{1/2} +
  \abs{\V{\hat{\tau}}{w}}^{1/2}\abs{\V{w}{w'}}^{1/2}\\ \lesssim   \abs{\V{\tau}{w}}^{1/2} \abs{\V{w}{w'}}^{1/2}
\end{multline*}
Hence,
\begin{multline*}\norm{K_j(z,w) - K_j(z,w')}^2\lesssim
\intl_{D^e(z,c\eta,c\eps)} \frac{ \abs{\V{w}{w'}} }{ \abs{\V{\tau}{w}}^{2n+2l+1} } d\nu_l(\tau)\\
\lesssim  \intl_0^\infty \frac{\abs{\V{w}{w'}} t^{2l-1}
dt}{(t+\abs{\V{z}{w}})^{2n+2l+1}} \lesssim \frac{ \abs{\V{w}{w'}} }{
\abs{ \V{z}{w} }^{2n+1} }=\frac{d(w,w')}{d(z,w)^{2n+1}}.
\end{multline*}

The last inequality (\ref{KZ3}) is a bit harder to prove. 

Let $z,\xi,w\in\dom,\ Cd(z,\xi)<d(z,w),$ and estimate the value
$$A = \abs{\V{\psi_j(z,\tau)}{w} - \V{\psi_j(\xi,\tau)}{w}  }. $$
Denote $\tau_z = \psi_j(z,\tau),\ \tau_\xi =\psi_j(\xi,\tau),$ then by (\ref{thm:rhostandart:cond1})
\begin{multline*}
\tau = \Phi(z)(\tau_z-z)+i (\tau_z-z)^{T}B(z)(\tau_z-z)e_n\\ = \Phi(\xi)(\tau_\xi-\xi)+i
(\tau_\xi-\xi)^{T}B(\xi)(\tau_\xi-\xi)e_n,
\end{multline*} whence denoting $\Psi(z) = \Phi(z)^{-1}$ and introducing $L(z,\xi,\tau)$ we obtain
\begin{align*}
\tau_z &= z+ \Psi(z)\tau -  (\tau_z-z)^{T}B(z)(\tau_z-z)\Psi(z)e_n ,\\
\tau_\xi &= \xi+ \Psi(\xi)\tau - (\tau_\xi-\xi)^{T}B(\xi)(\tau_\xi-\xi)\Psi(\xi)e_n ,\\
\tau_z-\tau_\xi &= z-\xi + (\Psi(z)-\Psi(\xi))\tau + L(z,\xi,\tau)e_n .
\end{align*}
Note, that norms of matrices $\norm{\Psi(\xi)}$ are bounded, thus
\begin{multline*}
 \abs{L(z,\xi,\tau)} \leq \abs{(\tau_z-z)^{T}B(z)(\tau_z-z)(\Psi(z)-\Psi(\xi))}\\ + \abs{(\tau_z-z)^{T}B(z)(\tau_z-z) -
 (\tau_\xi-\xi)^{T} B(\xi)(\tau_\xi-\xi)} \norm{\Psi(\xi)}\\
 \lesssim \abs{z-\xi}\abs{\tau_z-z}^2 + \abs{(\tau_z-z-\tau_\xi+\xi)^T
 B(z)(\tau_z-z)}\\
 + \abs{(\tau_\xi-\xi)^T B(z)(\tau_z-z) - (\tau_\xi-\xi)^T B(\xi)(\tau_\xi-\xi)}\\
  \lesssim \abs{z-\xi}\abs{\tau_z-z}^2 + \abs{z-\xi}\abs{\tau} + \abs{ ((\Psi(z)-\Psi(\xi))\tau+L(z,\xi,\tau)e_n)^T
  B(z)(\tau_z-z)}\\ + \abs{(\tau_\xi-\xi)^T(B(z)-B(\xi))(\tau_z-z)} + \abs{(\tau_\xi-\xi)^T B(\xi)
  (\tau_z-z-\tau_\xi-\xi)}\\
  \lesssim \abs{z-\xi}\abs{\tau_z-z}^2 + \abs{z-\xi}\abs{\tau} +\abs{\tau}
  \abs{L(z,\xi,\tau)}
  +\abs{z-\xi}\abs{\tau}^2 + \abs{\tau} L(z,\xi,\tau).
\end{multline*}

Choosing $\eps>0$ small enough we get 
$\abs{\tau}\leq\eta\abs{\IM(\tau_n)}+(1+\eta)\abs{\IM(\tau_n)}\leq3{\eps}$
and
$\abs{L(z,\xi,\tau)} \lesssim d(z,\xi)^{1/2}\abs{\tau},$ for $\tau\in D_0=D_0(\eta,\eps).$ Hence,
\begin{multline*}
 A\leq
 \abs{\scp{\partial\rho(\tau_z)-\partial\rho(\tau_\xi)}{\tau_z-w}} +
 \abs{\scp{\partial\rho(\tau_\xi)}{\tau_z-w}}\\
 \lesssim \abs{\tau_z-\tau_\xi}(\rho(\tau_z)+d(z,w)^{1/2}) +
 \abs{\scp{\partial\rho(\tau_z)-\partial\rho(\tau_\xi)}{z-\xi}} +
 \abs{\V{z}{\xi}}\\
 + \abs{\scp{\partial\rho(\tau_\xi)}{(\Psi(z)-\Psi(\xi))\tau}} + \abs{\scp{\partial\rho(\tau_\xi)}{L(z,\xi,\tau)}} \lesssim d(z,\xi)^{1/2}d(\tau_z,w) +\\
 \abs{\tau_z-\xi}\abs{z-\xi}+d(z,\xi)+\abs{z-\xi}\abs{\tau}+\abs{L(z,\xi,\tau)} \lesssim d(z,\xi) + d(z,\xi)^{1/2}d(z,w)^{1/2}\\
 \lesssim  d(z,\xi)^{1/2}d(z,w)^{1/2}
\end{multline*}
Combining this estimate with inequality
$\abs{\V{\tau_z}{w}}\asymp\abs{\V{\tau_\xi}{w}}$ we obtain
\begin{multline*}
\norm{K_j(z,w) - K_j(\xi,w)}^2 \lesssim
 \intl_{D^e(z,c\eta,c\eps)}
\frac{\abs{\chi_j(z)^{1/2}-\chi_j(\xi)^{1/2}}^2}{\abs{\V{\tau}{w}}^{2n+2l}} \frac{d\mu(\tau)}{\rho(\tau)^{n-2l+1}}\\
+ \chi_j(\xi) \intl_{D_0}
\frac{ \abs{\V{z}{\xi}}\abs{\V{z}{w}} }{ \abs{\V{\tau_z}{w}}^{2n+4} }\frac{d\mu(\tau)}{\RE(\tau_n)^{n-2l+1}}\\
\lesssim \frac{ \abs{\V{z}{\xi}} }{ \abs{\V{z}{w}}^{2n} } + \frac{
\abs{\V{z}{\xi}} }{ \abs{\V{z}{w}}^{2n+1} } \lesssim \frac{
\abs{\V{z}{\xi}} }{ \abs{\V{z}{w}}^{2n+1} }\\ 
\lesssim \frac{d(z,\xi)}{d(z,w)^{2n+1}}.
\end{multline*}\qed
\end{proof}

\begin{lemma}\label{lm:T1_pre} Let $\tau_z = \psi_j(z,\tau),$ then 
$$\norm{ \intl_\dom \frac{dS(w)}{\V{\tau_z}{w}^{n+l}} }\lesssim 1$$
and, consequently, $\norm{T_j(1)}\lesssim 1.$ 
\end{lemma}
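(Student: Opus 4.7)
The plan is to reduce the claim to a uniform pointwise bound $\abs{F(\tau)} \lesssim 1$ for
\[
 F(\tau) = \intl_\dom \frac{dS(w)}{\V{\tau_z}{w}^{n+l}}, \qquad \tau_z = \psi_j(z,\tau).
\]
Once this is in hand, the $L^2(D_0, d\nu_l)$ estimate is routine: the measure $d\nu_l(\tau) \asymp d\mu(\tau)/\RE(\tau_n)^{n-2l+1}$ has finite total mass on $D_0$, because the cross-section at $\RE\tau_n = t$ has volume $\asymp t^n$, so
\[
 \norm{F}^2 \lesssim \intl_{D_0} d\nu_l(\tau) \asymp \intl_0^\eps t^{2l-1}\,dt \lesssim 1
\]
for $l \in \N$.

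The central observation for the pointwise bound is that $h(w) = 1/\V{\tau_z}{w}^{n+l}$ is holomorphic in $w$ on a neighborhood of $\overline{\O}$. Its polar set is the complex affine hyperplane through $\tau_z$ with normal $\partial\rho(\tau_z)$, which for $\tau_z \in \dom$ is the complex tangent $T_{\tau_z}$ and so meets $\overline{\O}$ only at $\tau_z$ by strict convexity; the convexity inequality $\RE\V{\tau_z}{w} \geq \rho(\tau_z) + c\abs{\tau_z - w}^2$ then keeps the hyperplane disjoint from $\overline{\O}$ for $\tau_z$ slightly outside $\O$. I would next apply Stokes' theorem, noting that $\dbar h = 0$ and $\dbar\omega = (\dbar\partial\rho)^n$ with $\omega = dS$, to rewrite
\[
 F(\tau_z) = \intl_\dom h\,\omega = \intl_\O h(w)\,(\dbar\partial\rho(w))^n,
\]
a volume integral of a holomorphic function against the smooth Monge--Amp\`ere density.

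To extract the $O(1)$ bound from this volume integral, I would pass to the adapted coordinates $\varphi_j$ of Theorem \ref{thm:rhostandart} and compare with the quadratic model~(\ref{eq:rhostandart}). The model domain is biholomorphic to a complex ball, on which the mean-value identity $\int_{\partial B} g\,dS = \sigma(\partial B)\,g(0)$ for $g$ holomorphic on $\overline B$ produces an explicitly bounded evaluation of the model integrand. The higher-order discrepancy $\rho - \rho_{\text{model}} = O(\abs{w}^3)$ and the truncation arising from $\dom$ being a bounded perturbation of the model contribute only an $O(1)$ correction, controlled by the convexity bound $\abs{\V{\tau_z}{w}} \gtrsim \rho(\tau_z) + \abs{\tau_z - w}^2$ on the residual integrand. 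The consequence $\norm{T_j(1)} \lesssim 1$ is then immediate, since $T_j 1(z)$ differs from $F(\tau_z)$ only by the bounded multiplicative factors $\chi_j^{1/2}(z)$ and $J_j(z,\tau)$.

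The hard part is the refined pointwise bound itself. A naive absolute-value estimate via Lemma~\ref{lm:QM_est2} gives only $\abs{F(\tau_z)} \lesssim \rho(\tau_z)^{-l}$, which is insufficient because substitution produces $\intl_0^\eps t^{-1}\,dt = \infty$. Closing the gap to $\abs{F} \lesssim 1$ requires genuinely exploiting the holomorphy of the integrand --- equivalently, the Cauchy-type cancellation visible in the ball model --- rather than relying solely on the quasi-metric size bound on the kernel.
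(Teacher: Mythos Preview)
Your Stokes step converting the surface integral to a volume integral is exactly what the paper does, and it is the decisive move. But you then head in an unnecessarily hard direction. You aim for the uniform pointwise bound $\abs{F(\tau)}\lesssim 1$ by comparing with a ball (Siegel) model and invoking a mean-value identity, and you assert that the $O(\abs{w}^3)$ discrepancy and the truncation from the bounded domain contribute only an $O(1)$ correction ``controlled by the convexity bound.'' That sentence is the entire content of the claim, and it is not justified: once you take absolute values of the residual integrand, the convexity bound $\abs{\V{\tau_z}{w}}\gtrsim \rho(\tau_z)+\abs{\tau_z-w}^2$ puts you right back in the situation you yourself flagged as divergent. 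Whether the stronger pointwise bound is actually true for general strongly convex $\O$ is plausible from the ball computation, but proving it would be a separate (and harder) argument than the one you sketched.

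The point you missed is that after Stokes, the crude absolute-value estimate on the \emph{volume} integral already suffices. Foliating $\O$ by the level sets $\dom_{-t}$ and using $\abs{\V{\tau_z}{w}}\asymp \RE(\tau_n)+t+d(z,\hat w)$, the surface integral over each $\dom_{-t}$ gives $\lesssim (t+\RE\tau_n)^{-l}$, and then integrating in $t$ gains one full power (with a logarithm):
\[
\abs{F(\tau)}\ \lesssim\ \intl_0^{T}\frac{dt}{(t+\RE\tau_n)^{l}}\ \lesssim\ (\RE\tau_n)^{1-l}\,\ln\!\Big(1+\frac{1}{\RE\tau_n}\Big).
\]
This is weaker than $O(1)$ but is precisely enough: squared and integrated against $d\nu_l$ it yields $\intl_0^\eps s\,\ln^2(1+1/s)\,ds<\infty$. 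So the holomorphy is used exactly once, through Stokes, to buy the extra depth variable; no model comparison or further cancellation is needed.
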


\begin{proof}
The function
$\V{\tau_z}{w}$ is holomorphic in $\O$ with respect to $w,$ then 
\begin{equation}\label{eq:T1}
T_j(1)(\tau) =  \intl_\dom \frac{\chi_j(z)^{1/2} J_j(z,\tau)dS(w)}{\V{\tau_z}{w}^{n+l}} =  \intl_\O \frac{\chi_j(z)^{1/2} J_j(z,\tau)dV(w)}{\V{\tau_z}{w}^{n+l}}. 
\end{equation}

Analogously to lemma \ref{lm:QM_est2} we have $\abs{\V{\tau_z}{w}}
\asymp \IM(\tau_n) + |\rho(w)| + \abs{\V{z}{\hat{w}}},$ where $\hat{w}=\pr_{\dom}(w).$

Hence,
\begin{multline}\label{L2est1}
 \abs{T_j(1)(\tau)}\lesssim \intl_{\O}
\frac{d\mu(z)}{ \abs{\V{\tau_z}{w}}^{n+l} }\\
\lesssim\intl_0^{T}
dt\intl_{\dom_t}\frac{d\sigma_t}{(t+\IM(\tau_n)+ \abs{\V{z}{\hat{w}}} )^{n+l}}\\
\lesssim \intl_0^{T} dt \intl_0^\infty
\frac{v^{n-1}dv}{(t+\RE(\tau_n)+v)^{n+l}}\lesssim \intl_0^{T}
\frac{dt}{(t+\RE(\tau_n))^l}\\ \lesssim (\RE(\tau_n))^{1-l}
\ln{\left(1+\frac{1}{\RE(\tau_n)}\right)},
\end{multline}
and
\begin{multline} \label{L2est2}
\intl_{D_0} \abs{T_j(1)(\tau)}^2d\nu_l(\tau) \lesssim \intl_{D_0} (\RE(\tau_n))^{2-2l}\ln^2\left(1+\frac{1}{\RE(\tau_n)}\right)d\nu_l(\tau)\\ \lesssim\intl_0^{\eps} \ln^2{\left(1+\frac{1}{s}\right)} s ds\lesssim 1.
\end{multline}
This finishes the proof of the lemma.

\end{proof}

\begin{lemma} \label{lm:T1_2} $\norm{T_j'(1)} \lesssim 1.$
\end{lemma}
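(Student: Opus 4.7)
The plan is to bound $\norm{T_j'(1)(w)}_{L^2(D_0,d\nu_l)}$ uniformly in $w\in\dom$, where the formal transpose of~$T_j$ acts as
\begin{equation*}
T_j'(1)(w)(\tau) = \int_\dom \frac{\chi_j^{1/2}(z)\, J_j(z,\tau)}{\V{\psi_j(z,\tau)}{w}^{n+l}}\, dS(z).
\end{equation*}
Unlike in Lemma~\ref{lm:T1_pre}, the scalar factors $\chi_j^{1/2}(z)$ and $J_j(z,\tau)$ cannot be pulled outside the integral, and the denominator depends on the integration variable $z$ through $\psi_j(z,\tau)$, which is only $C^1$-smooth (not holomorphic) in $z$. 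Consequently, the Stokes' trick of Lemma~\ref{lm:T1_pre}---converting $\int_\dom \cdots dS(w)$ into $\int_\O \cdots dV(w)$ using holomorphicity in the integration variable---cannot be invoked directly. Moreover, the naive estimate $|\V{\psi_j(z,\tau)}{w}|\asymp\RE(\tau_n)+d(z,w)$ from Lemma~\ref{lm:QM_est2} yields $|T_j'(1)(w)(\tau)|\lesssim\RE(\tau_n)^{-l}$, and this is logarithmically too large after being squared and integrated against $d\nu_l$ over $D_0$.

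To extract the required cancellation, I would perform, for each fixed $\tau\in D_0$, the change of variables $\zeta=\psi_j(z,\tau)$. By Theorem~\ref{thm:rhostandart} this is a $C^1$-diffeomorphism of $\supp{\chi_j}\cap\dom$ onto a smooth hypersurface $\Sigma_\tau\subset\{\rho\asymp\RE(\tau_n)\}$, with the Jacobian of the transformation combining with $J_j(z,\tau)$ to give a bounded density on $\Sigma_\tau$. Using the Lipschitz estimate (\ref{thm:rhostandart:cond2}) on $\psi_j(\cdot,\tau)$, one then deforms $\Sigma_\tau$ onto the level set $\partial\Omega_t$, $t\asymp\RE(\tau_n)$, with the deformation error absorbed into a bounded density $g_\tau$ satisfying $\|g_\tau\|_\infty\lesssim 1$. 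After these reductions, $T_j'(1)(w)(\tau)$ is expressed (modulo bounded error) as $\int_{\partial\Omega_t} g_\tau(\zeta)/\V{\zeta}{w}^{n+l}\,dS_t(\zeta)$, the same type of integral appearing in Lemma~\ref{lm:T1_pre} but on a parallel surface.

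To treat this remaining integral I would apply a duality/Stokes' argument: expanding $\norm{T_j'(1)(w)}^2$ as a double integral in $\zeta_1,\zeta_2\in\partial\Omega_t$ and exploiting the holomorphicity of the two factors $1/\V{\zeta_i}{w}^{n+l}$ in $w$, one can transfer the $w$-dependence inside the integral via the Cauchy-Leray-Fantappi\`e reproducing property and reduce to a volume integral over $\O$, analogously to (\ref{L2est1}). This improves the pointwise-in-$\tau$ bound to $|T_j'(1)(w)(\tau)|\lesssim\RE(\tau_n)^{1-l}\ln(1+1/\RE(\tau_n))$, matching the bound obtained in Lemma~\ref{lm:T1_pre}. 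Integrating against $d\nu_l$ over $D_0$ then converges by exactly the computation (\ref{L2est2}), giving $\norm{T_j'(1)(w)}\lesssim 1$ uniformly in $w$.

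The main obstacle is implementing the change-of-variables and deformation step uniformly in $\tau\in D_0$ as $\tau\to 0$, and then finding the correct Stokes'-type application after squaring, since the holomorphicity is in $w$ (the free parameter) rather than in $\zeta$ (the integration variable). The Lipschitz estimate (\ref{thm:rhostandart:cond2}) and the cutoff $\chi_j^{1/2}$, keeping the image inside the coordinate chart $\Gamma_j$, are the essential tools; once the reduction to an integral over $\partial\Omega_t$ is in place, the remaining estimate parallels (\ref{L2est1})--(\ref{L2est2}).
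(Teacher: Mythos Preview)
Your change-of-variables instinct is correct and matches the paper, but the mechanism you propose for extracting cancellation in the final step is wrong. You want to square, write the result as a double integral in $\zeta_1,\zeta_2$, and then use holomorphicity of $1/\V{\zeta_i}{w}^{n+l}$ in $w$ together with the Cauchy--Leray--Fantappi\`e reproducing property. But $w\in\dom$ is the \emph{free} parameter, not an integration variable; there is no $w$-integral to which a reproducing formula could be applied. And after squaring, the integrand involves the product of a holomorphic and an anti-holomorphic factor in $w$, which for $n\ge 2$ enjoys no useful mean-value or reproducing identity. Your claimed conclusion, a pointwise-in-$\tau$ bound $|T_j'(1)(w)(\tau)|\lesssim \RE(\tau_n)^{1-l}\ln(1+1/\RE(\tau_n))$, is also inconsistent with a method that starts by integrating $|\cdot|^2$ over $\tau$.

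The missing idea is that the cancellation lives in the \emph{integration variable}, not in $w$. Writing $\tau_z=\psi_j(z,\tau)$, the paper splits $dS(z)=(dS(z)-dS(\tau_z))+dS(\tau_z)$. The difference term is $O(\RE(\tau_n))\,d\sigma(z)$ because $|z-\tau_z|\lesssim\RE(\tau_n)$, and that extra power makes the naive estimate converge. For the main term $\int_\dom \chi_j^{1/2}(z)J_j(z,\tau)\,\dfrac{dS(\tau_z)}{\V{\tau_z}{w}^{n+l}}$, the crucial identity is
\[
d_\xi\,\frac{dS(\xi)}{\V{\xi}{w}^{n+l}}=-\frac{l}{n}\,\frac{dV(\xi)}{\V{\xi}{w}^{n+l}},
\]
obtained from $d_\xi\big(dS(\xi)/\V{\xi}{w}^{n}\big)=0$. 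Stokes' theorem in the $z$-variable over $\O_{\eps_1}\setminus\O$ then converts the surface integral into volume integrals (with one $\dbar_z$ landing on the smooth factor $\chi_j^{1/2}J_j$), and these are estimated exactly as in Lemma~\ref{lm:T1_pre}. In short: you do not need any deformation to $\partial\Omega_t$ nor any argument using holomorphicity in $w$; the form you are integrating is already ``almost closed'' in $\xi=\tau_z$, and Stokes' theorem in that variable is the whole mechanism.
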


\begin{proof} Consider
\begin{multline*}
 \overline{T'_j(1)}(w)(\tau)= \intl_\dom \frac{\chi_j(z)^{1/2}
J_j(z,\tau) dS(z)}{\V{\tau_z}{w}^{n+l}}\\ = \intl_\dom
\frac{\chi_j(z)^{1/2} J_j(z,\tau)
(dS(z)-dS(\tau_z))}{\V{\tau_z}{w}^{n+l}} + \intl_\dom
\frac{\chi_j(z)^{1/2} J_j(z,\tau) dS(\tau_z)}{\V{\tau_z}{w}^{n+l}} =
L_1+L_2.
\end{multline*}
Note that $ \abs{z-\tau_z}\lesssim \RE(\tau_n),$ therefore
$\abs{dS(z)-dS(\psi(z,\tau))} \lesssim \RE(\tau_n)d\sigma(z)$ and
\begin{multline*}
\abs{L_1} \lesssim \intl_\dom \frac{\RE(\tau_n)d\sigma(z)}{
\abs{\V{\tau_z}{w}}^{n+l} } \lesssim
\frac{\RE(\tau_n)d\sigma(z)}{(\RE(\tau_n)+ \abs{\V{z}{w}})^{n+l} }\\
\lesssim \intl_0^\infty \frac{\RE(\tau_n)
v^{n-1}dv}{(\RE(\tau_n)+v)^{n+l}}\lesssim\frac{1}{\RE(\tau_n)^{l-1}}.
\end{multline*}
Thus we get
\begin{equation} \label{T'1_I1}
\intl_{D_0} \abs{L_1}^2 d\nu_l(\tau) \lesssim \intl_{D_0}
\frac{1}{\RE(\tau_n)^{2l-2}}\frac{d\mu(\tau)}{\RE(\tau_n)^{n-2l+1}}
\lesssim\intl_0^{\eps} \frac{t^ndt}{t^{n-1}} \lesssim 1
\end{equation}
To estimate $L_2$ we recall that $d_\xi
\frac{dS(\xi)}{\V{\xi}{z}^{n}} =0,\ z\in\dom,\ \xi\in\CO,$  and
consequently
\begin{multline*}
d_\xi \frac{dS(\xi)}{\V{\xi}{z}^{n+l}} =
\frac{(\dbar\partial\rho(\xi))^n}{\V{\xi}{z}^{n+l}}\\
 - (n+l)\frac{(\dbar_\xi\left(\V{\xi}{z}\right)\wedge\dbar\partial\rho(\xi))^{n-1}}{\V{\xi}{z}^{n+l}}
= -\frac{l}{n} \frac{dV(\xi)}{\V{\xi}{z}^{n+l}}.
\end{multline*}
By Stokes' theorem we obtain
\begin{multline*}
L_2= \intl_\dom \frac{\chi_j(z)^{1/2} J_j(z,\tau)
dS(\tau_z)}{\V{\tau_z}{w}^{n+l}}\\ = \intl_{\O_{\eps_1}\setminus\O}
\frac{\dbar_z\left(\chi_j(z)^{1/2} J_j(z,\tau)\right)\wedge
dS(\tau_z)}{\V{\tau_z}{w}^{n+l}} -\frac{l}{n}
\intl_{\O_{\eps_1}\setminus\O} \frac{\chi_j(z)^{1/2} J_j(z,\tau)
dV(\tau_z)}{\V{\tau_z}{z}^{n+l}}
\end{multline*}
Again similarly to lemma \ref{lm:QM_est2} we have $\abs{\V{\tau_z}{w}}
\asymp \IM(\tau_n) + \rho(z) + \abs{\V{\hat{z}}{w}},$ where $\hat{z}=\pr_{\dom}(z),$
and the estimate $\norm{L_2}\lesssim 1$ is proven analogously to lemma \ref{lm:T1_pre}. Combining this with the estimate (\ref{T'1_I1}) we get  $\norm{T_j(1)} \lesssim 1. $ \qed

\end{proof}

\begin{lemma} \label{lm:T1_3} Operator $T_j$ is weakly bounded.
\end{lemma}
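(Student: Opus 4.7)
My plan is to establish a uniform pointwise bound on $\norm{T_j f(z)}_{L^2(D_0, d\nu_l)}$ for $z \in \dom$ and then integrate against $g$. The ingredients are the kernel size estimate from Lemma~\ref{lm:T1_1} and the $T_j(\mathbf{1})$ bound from Lemma~\ref{lm:T1_pre}.

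First I would decompose
\begin{equation*}
T_j f(z) = f(z)\, T_j(\mathbf{1})(z) + \int_\dom K_j(z,w)\, (f(w) - f(z))\, dS(w).
\end{equation*}
The first summand has $L^2(D_0, d\nu_l)$-norm at most $\abs{f(z)}\cdot \norm{T_j(\mathbf{1})(z)}_{L^2(D_0,d\nu_l)} \lesssim 1$ by Lemma~\ref{lm:T1_pre}. For the second, I would combine the size bound $\norm{K_j(z,w)}_{L^2(D_0, d\nu_l)} \lesssim d(z,w)^{-n}$ from Lemma~\ref{lm:T1_1} with the bump smoothness $\abs{f(w) - f(z)} \leq \min(2, d(z,w)^\gamma/r^\gamma)$, splitting the $w$-integration at $d(z,w) \asymp r$. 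Using $\sigma(B(z,s)) \asymp s^n$, the near-diagonal piece yields $\lesssim r^{-\gamma}\int_0^r s^{\gamma - 1}\,ds \asymp 1$, and the remote piece yields $\lesssim \int_r^{\diam{\dom}} s^{-1}\,ds \lesssim 1 + \log(1/r)$.

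Combining, $\norm{T_j f(z)}_{L^2(D_0, d\nu_l)} \lesssim 1 + \log(1/r)$ uniformly in $z$. Since $g$ is supported in $B(w_0, r)$ with $\abs{g} \leq 1$ and $\sigma(B(w_0, r)) \asymp r^n$, I would then conclude
\begin{equation*}
\norm{\scp{g}{T_j f}}_{L^2(D_0, d\nu_l)} \leq \int_{B(w_0, r)} \abs{g(z)}\, \norm{T_j f(z)}_{L^2(D_0, d\nu_l)}\, dS(z) \lesssim r^n (1 + \log(1/r)) \lesssim r^{-n}
\end{equation*}
for $r \in (0, \eps_0]$, establishing the weak boundedness.

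The only mild annoyance is the logarithmic factor from the far-diagonal piece, but it is harmless here since the stated bound $r^{-n}$ easily absorbs $r^n\log(1/r)$ for $r \leq \eps_0$. No substantial new difficulty arises beyond invoking Lemmas~\ref{lm:T1_1} and~\ref{lm:T1_pre}; this is essentially a standard Calderón--Zygmund maneuver exported to the operator-valued setting.
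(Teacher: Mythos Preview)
Your decomposition $T_jf(z)=f(z)\,T_j(1)(z)+\int_\dom K_j(z,w)\bigl(f(w)-f(z)\bigr)\,dS(w)$ is exactly the one the paper uses (rewritten there as $L_1+|f(z)|(L_2+L_3)$ after one further splitting), and your treatment of the near-diagonal piece via the size bound and the H\"older regularity of $f$ matches the paper's estimate of $L_1$.

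The gap is in the far-diagonal piece. Taking the Minkowski route
\[
\Bigl\|\int_{d(z,w)>Cr}K_j(z,w)\,dS(w)\Bigr\|\le\int_{d(z,w)>Cr}d(z,w)^{-n}\,dS(w)\asymp\log(1/r)
\]
yields only $\norm{\scp{g}{T_jf}}\lesssim r^n\log(1/r)$. You then rely on the literal inequality $\le Cr^{-n}$ in Theorem~\ref{thm:T1} to absorb the logarithm, but that exponent is a misprint: the paper's own proof concludes with $\norm{\scp{g}{Tf}}\lesssim|B(w_0,r)|\asymp r^n$, which is the standard weak boundedness bound the $T1$ machinery actually requires. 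With the correct target $r^n$, your estimate $r^n\log(1/r)$ is insufficient as $r\to 0$, so the argument as written does not close.

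The paper removes the logarithm by exploiting cancellation in the exterior-truncated integral $\int_{\dom\setminus W(z,\tau,cr)}\V{\tau_z}{w}^{-(n+l)}\,dS(w)$ rather than bounding it in absolute value. Stokes' theorem converts this boundary integral into a solid integral over a subdomain of $\O$ (controlled exactly as in Lemma~\ref{lm:T1_pre}) plus an integral over the level set $\{w\in\O:|\V{\tau_z}{w}|=t+cr\}$; a second application of Stokes on the corresponding sublevel set then handles that piece and produces a genuine $O(1)$ bound in $L^2(D_0,d\nu_l)$. This cancellation step is the essential content your Minkowski shortcut misses.
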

\begin{proof}
 Let $f,g\in A(\frac{1}{2},w_0,r),$ denote again
$\tau_z=\psi_j(z,\tau),$ then
$$\norm{\scp{g}{T_jf}}^2 \lesssim \intl_{D_0} d\nu_l(\tau) \left( \intl_{B(w_0,r)} \abs{g(z)} dS(z) \abs{\intl_{B(w_0,r)}\frac{f(w) dS(w)}{\V{\tau_z}{w}^{n+l}}
} \right)^2.$$

Denote $t:=\inf\limits_{w\in\dom} \abs{\V{\tau_z}{w}} $ and
introduce the set
$$W(z,\tau,r) :=\left\{ w\in\dom: \abs{\V{\tau_z}{w}}<t+r\right\}.$$
Note that $\supp{f}\subset B(w_0,r)\subset W(z,\tau,cr) \subset B(z,c^2r)$ for some
$c>0,$ therefore,
\begin{multline*}
\abs{\intl_{B(w_0,r)}\frac{f(w) dS(w)}{\V{\tau_z}{w}^{n+l}} }\\
=\abs{\intl_{W(z,\tau,cr)}\frac{f(w)
dS(w)}{\V{\tau_z}{w}^{n+l}} } \lesssim
\intl_{W(z,\tau,cr)}\frac{\abs{f(z)-f(w)} dS(w)}{
\abs{\V{\tau_z}{w}}^{n+l} }\\ + \abs{f(z)} \left( \abs{\intl_{\dom\setminus
W(z,\tau,cr)}\frac{dS(w)}{\V{\tau_z}{w}^{n+l}} } +\abs{\intl_{\dom      }\frac{dS(w)}{\V{\tau_z}{w}^{n+l}} }\right)\\ = L_1(z,\tau) +
\abs{f(z)} \left(L_2(z,\tau) + L_3(z,\tau)\right).
\end{multline*}
It follows from the estimate $|f(z)-f(w)|\leq \sqrt{v(w,z)/r}$ that
\begin{multline*}
L_1(z,\tau)\lesssim \frac{1}{\sqrt{r}}\intl_{B(z,c^2r)}
\frac{v(w,z)^{1/2}}{(\RE(\tau_n) + v(w,z))^{n+l}}\lesssim
\frac{1}{\sqrt{r}} \intl_0^{c^2r} \frac{t^{n-1/2}dt}{(\RE(\tau_n) + t)^{n+l}}\\
\lesssim \frac{1}{\sqrt{r}} \intl_0^{c^2r}\frac{dt}{(\RE(\tau_n) +
t)^{l+1/2}} \lesssim
\frac{1}{\sqrt{r}}\left(\frac{1}{\RE(\tau_n)^{l-1/2}}-
\frac{1}{(\RE(\tau_n)+r)^{l-1/2}}\right)\\ = \frac{1}{\sqrt{r}}
\frac{(\RE(\tau_n)+r)^{l-1/2} - r^{l-1/2}}{\RE(\tau_n)^{l-1/2}
(\RE(\tau_n)+r)^{l-1/2} } \lesssim \frac{1}{\sqrt{r}}
\frac{(\RE(\tau_n)+r)^{2l-1} - r^{2l-1}}{
\IM(\tau_n)^{l-1/2}(\RE(\tau_n)+r)^{2l-1}}\\ \lesssim
\frac{1}{\sqrt{r}} \frac{r \RE(\tau_n)^{2l-2} +
r^{2l-1}}{\RE(\tau_n)^{l-1/2}(\RE(\tau_n)+r)^{2l-1}}.
\end{multline*}
Estimating the $L^2(D_0,d\nu_l)-$norm of the function $L_1(z,\tau),$
we obtain
\begin{multline} \label{est:I1}
\intl_{D_0(\tau)} L_1(z,\tau)^2 d\nu_l(\tau)\\ \lesssim
\intl_{D_0(\tau)} \left(  \frac{r
\RE(\tau_n)^{2l-3}}{(\RE(\tau_n)+r)^{4l-2}} +
\frac{r^{4l-3}}{\RE(\tau_n)^{2l-1}(\RE(\tau_n)+r)^{4l-2}} \right)
\frac{d\mu(\tau)}{\RE(\tau_n)^{n-2l+1}}\\
\lesssim r \intl_0^\infty \frac{s^{4l-4}}{(s+r)^{4l-2}} ds +
r^{4l-3} \intl_0^\infty \frac{ds}{(s+r)^{4l-2}}\lesssim 1
\end{multline}
To estimate the second summand $L_2$ we apply the Stokes theorem to the
domain
$$W_0=\left\{ w\in\O:
\abs{\V{\tau_z}{w}} > t+c r\right\}$$ and to the form $\frac{dS(w)}{\V{\tau_z}{w}^{n+l}}$
\begin{multline*}
\intl_{\dom\setminus W(z,\tau,cr)}\frac{dS(w)}{\V{\tau_z}{w}^{n+l}}
=\intl_{W_0}\frac{dV(w)}{\V{\tau_z}{w}^{n+l}}\\ - \intl_{\substack{w\in\O\\ \abs{v(\tau_z,w)} = t+cr}
}\frac{dS(w)}{\V{\tau_z}{w}^{n+l}}\\ =L_4 -\frac{1}{(t+cr)^{2n+2l}}
\intl_{\substack{w\in\O\\ \abs{v(\tau_z,w)} =
t+cr}}\overline{\V{\tau_z}{w}}^{n+l} dS(w).
\end{multline*}
By the proof of lemma \ref{lm:T1_pre} $$\norm{L_4}\leq \intl_{W_0}\frac{dV(w)}{\abs{\V{\tau_z}{w}}^{n+l}}\leq \intl_{\O}\frac{dV(w)}{\abs{\V{\tau_z}{w}}^{n+l}} \lesssim 1.$$ Applying Stokes' theorem again, now to the domain $$\left\{ w\in\O:
\abs{\V{\tau_z}{w}}<t+cr\right\},$$ we obtain
\begin{multline*}
L_5:=\intl_{\substack{w\in\O\\
\abs{v(\tau_z,w)}=t+cr}}\overline{\V{\tau_z}{w}}^{n+l} dS(w)\\ =
-\intl_{\substack{w\in\dom\\ \abs{v(\tau_z,w)}<t+cr}}\overline{\V{\tau_z}{w}}^{n+l} dS(w)\\
+ \intl_{\substack{w\in\O\\
\abs{v(\tau_z,w)}<t+cr}}
\dbar_w\left(\overline{\V{\tau_z}{w}}^{n+l}\right)\wedge dS(w)\\ +
\intl_{\substack{w\in\O\\
\abs{v(\tau_z,w)}<t+cr}} \overline{\V{\tau_z}{w}}^{n+l} dV(w).
\end{multline*}
Since $ \abs{\
\dbar_w\left(\overline{\V{\tau_z}{w}}^{n+l}\right)\wedge dS(w)}
\lesssim \abs{\V{\tau_z}{w}}^{n+l-1}$ we get
$$
\abs{L_5} \lesssim \intl_t^{t+cr} (s^{n+l}s^{n-1} + s^{n+l}s^{n} +
s^{n+l-1}s^n) ds\lesssim \intl_t^{t+cr} s^{2n+l-1} ds \lesssim r
(t+r)^{2n+l-1}.
$$
Note that $t\asymp \rho(\tau_z)\asymp \IM(\tau_n)$ and consequently
\begin{multline} \label{est:I2}
\intl_{D_0} L_5(z,\tau)^2 d\nu_l(\tau) \lesssim
\intl_{D_0}  \left(\frac{r
(\RE(\tau_n)+r)^{2n+l-1}}{(\RE(\tau_n)+r)^{2n+2l}}\right)^2
d\nu_l(\tau)\\ \lesssim \intl_0^\infty \frac{r^2}{(t+r)^{2l+2}}
\frac{t^n dt}{t^{n-2l+1}} = r^2\intl_0^\infty
\frac{t^{2l-1}}{(t+r)^{2l+2}} \frac{t^n dt}{t^{n-2l+1}} \lesssim r^2
\intl_0^\infty \frac{dt}{(r+t)^3} \lesssim 1.
\end{multline}

Summarizing estimates (\ref{est:I1}, \ref{est:I2}), lemma \ref{lm:T1_pre} and condition
$\abs{f(z)}\leq 1,\ z\in\dom,$ we obtain
\begin{multline*}
\norm{\scp{g}{Tf}}^2\\ \leq \intl_{D_0} d\nu_l(\tau) \left(
\intl_{B(w_0,r)} \abs{g(z)} \left( L_1(z,\tau) + |f(z)|(L_2(z,\tau)+L_3(z,\tau))\right) dS(z)\right)^2 \\
\lesssim \norm{g}_{L^1(\dom)}^2
\sup\limits_{z\in\dom}\intl_{D_0}\left( L_1(z,\tau)^2 + L_2(z,\tau)^2+L_3(\tau,z)^2 \right)
d\nu_l(\tau)\\ \lesssim \norm{g}_{L^1(\dom)}^2\lesssim
\abs{B(w_0,r)}^2.
\end{multline*}
The last estimate implies weak boundedness of operator $T$ and
completes the proof of the lemma. \qed
\end{proof}

\begin{proof}[of the theorem \ref{thm:area_int}] Since operators $T_j$ with kernels $K_j$
verify the conditions of $T1$-theorem, we have $T_j\in\mathscr{L}
(L^p(\dom), L^p(\dom,L^2(D_0,d\nu_l))$ and
\begin{multline*}
 \norm{I(g)}_{L^p(\dom)}^p\suml_{j=1}^N \intl_\dom \norm{T_j g(z)}^p dS(z)\\ =
\suml_{j=1}^N \intl_\dom dS(z) \left(\ \intl_{D_0} \abs{\ \intl_\dom
\frac{ g(w)\chi_j^{1/2}(z) J_j(z,\tau)
dS(w)}{\V{\psi_j(z,\tau)}{w}^{n+1}} }^2
\frac{d\mu(\tau)}{\RE(\tau_n)^{n-1}}\right)^p\\ \lesssim
\norm{g}^p_{L^p(\dom)}.
\end{multline*}
Thus by decomposition (\ref{eq:Luzin_decomposition}) $\intl_\dom
I_l(g,z)^p\ d\sigma(z) \lesssim \intl_\dom \abs{g(z)}^p\ d\sigma(z),$
which proves the theorem. \qed
\end{proof}

\section{Boundedness of area-integral $I_l(g,z)$ on $\BMO(\dom).$}

In this section we prove theorem~\ref{thm:area_int_BMO}. Analogously to theorem~\ref{thm:area_int} in is enough to prove that $T_j \in{\mathcal{L}}\left(\BMO(\dom),\BMO(\dom,L^2(D_0,d\nu_l))\right).$

\begin{lemma}\label{lm:BMO} Functions $T_j(1),T_j'(1)$ satisfy H\"{o}lder condition with exponent $1/2.$
\end{lemma}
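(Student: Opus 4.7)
The plan is to prove $\norm{T_j(1)(z)-T_j(1)(\xi)}_{L^2(D_0,d\nu_l)}\lesssim d(z,\xi)^{1/2}$, and the analogous bound for $T'_j(1)$, directly from the volume-integral representations already obtained in Lemmas~\ref{lm:T1_pre}--\ref{lm:T1_2}, combined with the refined kernel-difference estimate produced inside the proof of Lemma~\ref{lm:T1_1}.

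Starting from the Stokes identity~(\ref{eq:T1}),
$$T_j(1)(z)(\tau)=\chi_j(z)^{1/2}J_j(z,\tau)\intl_\O\frac{dV(w)}{\V{\psi_j(z,\tau)}{w}^{n+l}},$$
I would split $T_j(1)(z)-T_j(1)(\xi)$ into three summands corresponding to the variations of $\chi_j^{1/2}$, of $J_j(\cdot,\tau)$, and of the kernel $\V{\psi_j(z,\tau)}{w}^{-(n+l)}$. The first two are Lipschitz in $\abs{z-\xi}$ thanks to smoothness of $\chi_j$ and to~(\ref{thm:rhostandart:cond2}); since strong convexity of $\O$ gives $\abs{z-\xi}\lesssim d(z,\xi)^{1/2}$ on $\dom$, and since the remaining integral has $L^2(D_0,d\nu_l)$-norm $\lesssim 1$ by~(\ref{L2est1})--(\ref{L2est2}), the required H\"{o}lder control follows for those two pieces.

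For the kernel-variation piece I would reuse
$$\abs{\V{\psi_j(z,\tau)}{w}-\V{\psi_j(\xi,\tau)}{w}}\lesssim d(z,\xi)^{1/2}\abs{\V{\psi_j(z,\tau)}{w}}^{1/2}$$
and $\abs{\V{\psi_j(z,\tau)}{w}}\asymp\abs{\V{\psi_j(\xi,\tau)}{w}}$ from the proof of Lemma~\ref{lm:T1_1}; both extend from $w\in\dom$ to $w\in\O$ verbatim, the only ingredient being the analog of Lemma~\ref{lm:QM_est2} in the complement. Writing $\tau_z=\psi_j(z,\tau)$, this yields pointwise
$$\abs{\V{\tau_z}{w}^{-(n+l)}-\V{\tau_\xi}{w}^{-(n+l)}}\lesssim\frac{d(z,\xi)^{1/2}}{\abs{\V{\tau_z}{w}}^{n+l+1/2}}.$$
Integrating in $w\in\O$ sliced by level sets of $\rho$ (exactly as in~(\ref{L2est1})) gives $\lesssim d(z,\xi)^{1/2}\RE(\tau_n)^{1/2-l}$; squaring and integrating against $d\nu_l$ on $D_0$ then reduces to $d(z,\xi)\intl_0^\eps t^n/t^n\,dt\asymp d(z,\xi)\eps$, producing the desired bound.

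For $T'_j(1)$ I would apply the same machinery to the decomposition $T'_j(1)=L_1+L_2$ of Lemma~\ref{lm:T1_2}: the term $L_1$ already carries an extra factor $\RE(\tau_n)$ and is easier, while $L_2$ after Stokes is a volume integral whose $z$-dependence has exactly the shape of $T_j(1)$, so the three-way split above applies directly. The main obstacle is the bookkeeping of the error terms from Theorem~\ref{thm:rhostandart}, in particular the polynomial correction $L(z,\xi,\tau)$ introduced in the proof of Lemma~\ref{lm:T1_1}, making sure that every summand accumulates the same factor $d(z,\xi)^{1/2}$ with a $\tau$-integrand whose $L^2(D_0,d\nu_l)$-norm remains bounded uniformly in $\eps$.
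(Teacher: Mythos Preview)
Your approach is correct and essentially the same as the paper's. The paper packages your three-way split into a single kernel-difference bound $\norm{K_j(\xi,w)-K_j(z,w)}\lesssim d(\xi,z)^{1/2}/d(z,w)^{n+1/2}$ for $z,\xi\in\dom$, $w\in\O$ (recall that $K_j$ in~(\ref{eq:kernels}) already contains the factors $\chi_j^{1/2}$ and $J_j$, so your first two pieces are absorbed into this estimate), then applies Minkowski's inequality to the volume representation~(\ref{eq:T1}) and integrates; the estimate for $T'_j(1)$ is declared analogous without further detail.
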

\proof{ Similarly to lemma~\ref{lm:T1_1} we can prove, that 
$$\norm{ K_j(\xi,w)-K_j(z,w) }\lesssim \frac{ d(\xi,z)^{1/2} }{ d(z,w)^{n+1/2} },\ x,z\in\dom, w\in\O$$
thus by Jensen's inequality and by expression (\ref{eq:T1}) for $T_j(1)$ 
\begin{multline*}
\norm{ T_j(1)(z)-T_j(1)(\xi) }= \norm{ \int_{\O}\left(K_j(\xi,w)-K_j(z,w)\right)dV(w) }\\
\lesssim
  \int_{\O}\norm{K_j(\xi,w)-K_j(z,w)}dV(w) \lesssim \int_{\O}\frac{ d(\xi,z)^{1/2} }{ d(z,w)^{n+1/2} }\lesssim d(\xi,z)^{1/2}.
\end{multline*}
The estimate for $T_j'$ is obtained analogously.
}

\begin{lemma}
 $T_j,T'_j \in{\mathcal{L}}\left(\BMO(\dom),\BMO(\dom,L^2(D_0,d\nu_l))\right).$
\end{lemma}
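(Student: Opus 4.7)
My plan is to parallel the proof of Theorem~\ref{thm:area_int}, replacing the $T1$-theorem verification with the three standard BMO estimates. Fix a quasiball $B=B(z_0,r)\subseteq\dom$ and $f\in\BMO(\dom)$; after subtracting $f_{\dom}$ (which does not alter the seminorm) I may assume $f_{\dom}=0$. Decompose
\[
f = f_{2B} + (f-f_{2B})\chi_{2B} + (f-f_{2B})\chi_{\dom\setminus 2B} =: f_{2B}+f_1+f_2,
\]
and bound $\sigma(B)^{-1}\int_B \|T_j f(z) - c_B\|_{L^2(D_0,d\nu_l)}\,d\sigma(z)$ by $\|f\|_{\BMO}$ for the constant vector $c_B:=T_j f_2(z_0)+f_{2B}T_j(1)(z_0)$, summing the contributions of $T_jf_1$, $T_jf_2(z)-T_jf_2(z_0)$ and $f_{2B}(T_j(1)(z)-T_j(1)(z_0))$.

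For $T_jf_1$ I invoke the $L^2$-boundedness from Theorem~\ref{thm:area_int} (with $p=2$) together with Cauchy--Schwarz, giving
\[
\frac{1}{\sigma(B)}\int_B\|T_j f_1(z)\|_{L^2}\,d\sigma \lesssim \sigma(B)^{-1/2}\|f_1\|_{L^2(\dom)}\lesssim \|f\|_{\BMO},
\]
where the second step uses the John--Nirenberg bound $\|f-f_{2B}\|_{L^2(2B)}\lesssim\sigma(2B)^{1/2}\|f\|_{\BMO}$. For $T_jf_2(z)-T_jf_2(z_0)$ I apply the kernel H\"older estimate (\ref{KZ2}) from Lemma~\ref{lm:T1_1} (with exponent $\gamma=1/2$) together with the classical dyadic BMO tail bound $\int_{\dom\setminus 2B}|f(w)-f_{2B}|\,d(z_0,w)^{-n-1/2}\,dS(w)\lesssim r^{-1/2}\|f\|_{\BMO}$, obtaining $\|T_jf_2(z)-T_jf_2(z_0)\|_{L^2}\lesssim\|f\|_{\BMO}$ uniformly for $z\in B$.

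The main obstacle is the third piece $f_{2B}(T_j(1)(z)-T_j(1)(z_0))$: since $T_j(1)\ne 0$ the term does not vanish, and the factor $|f_{2B}|$ is not directly controlled by $\|f\|_{\BMO}$. Here Lemma~\ref{lm:BMO} delivers H\"older continuity of $T_j(1)$ with exponent $1/2$, whence
\[
\frac{1}{\sigma(B)}\int_B\|T_j(1)(z)-T_j(1)(z_0)\|_{L^2}\,d\sigma \lesssim r^{1/2}.
\]
Combining this with the standard comparison-of-averages bound $|f_{2B}|\lesssim(1+\log(\mathrm{diam}(\dom)/r))\|f\|_{\BMO}$ and the elementary observation that $r^{1/2}(1+\log(1/r))$ is bounded on $(0,\mathrm{diam}(\dom)]$, one sees that the H\"older gain absorbs the logarithmic loss, so this piece too is $\lesssim\|f\|_{\BMO}$. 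This step is exactly where the strictly stronger H\"older regularity from Lemma~\ref{lm:BMO}---beyond what $T_j(1)\in\BMO$ would supply---is indispensable. Summing the three contributions yields the BMO bound for $T_j$, and the identical argument with the H\"older regularity of $T'_j(1)$ from Lemma~\ref{lm:BMO} handles $T'_j$; combined with decomposition (\ref{eq:Luzin_decomposition}) this proves Theorem~\ref{thm:area_int_BMO}.
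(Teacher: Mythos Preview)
Your proof is correct and follows essentially the same route as the paper: both arguments split $f$ into the constant $f_{2B}$ (paper: $b_1=b_{B_{C\varepsilon}}$), the local oscillation $(f-f_{2B})\chi_{2B}$ (paper: $b_2$), and the far piece (paper: $b_3$), then handle these respectively via the H\"older continuity of $T_j(1)$ from Lemma~\ref{lm:BMO} absorbing the logarithmic growth of $|f_{2B}|$, the $L^2$-boundedness of $T_j$, and the kernel smoothness estimate~(\ref{KZ2}). The only cosmetic differences are your explicit choice of the constant $c_B$ (instead of the average $(T_jb)_{B_\varepsilon}$) and your use of the doubled ball $2B$ rather than $B_{C\varepsilon}$; note that the latter should in principle be enlarged to $CB$ with $C$ matching the constant in~(\ref{KZ2}), as the paper does, but this is a trivial adjustment.
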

\proof{
Let $b\in\BMO(\dom)$ and fix a quasiball $B_\eps=B(z_0,\eps)\subset\dom.$ We decompose $b$ as a sum 
$b=b_1+b_2+b_3,$ where $b_1 = b_{B_{C\eps}},\ b_2=
(b-b_1)\chi_{B_C\eps(z_0)}, $ $\chi_{B_{C\eps}(z_0)}$ is a characteristical function of a quasiball $B_{C\eps}(z_0),$ and $C>0$ is large enough. Here we will Use notation $|B_\eps|=dS(B_\eps)$ (recall that  measures $dS$ and $d\sigma$ are equivalent).

By \cite{FS72} we have $\abs{b_1}\leq \norm{b}_{BMO} \log\frac{1}{\eps},$  hence by lemma~\ref{lm:BMO}
\begin{multline*}
   \frac{1}{|B_\eps|} \int\limits_{B_\eps} \norm{T_j b_1 (z) -
   (T_j b_1)_{B_\eps}}dS(z)\lesssim
   \frac{|b_1|}{|B_\eps|^2}\int\limits_{B_\eps}\int\limits_{B_\eps}\norm{T_j 1(z)-T_j1(\xi)}dS(\xi)dS(z)\\
   \lesssim |b_1| \eps^{1/2} \leq \eps^{1/2}\log\frac{1}{\eps}
   \norm{b}_{BMO} \lesssim \norm{b}_{BMO}.   
\end{multline*}

To estimate $T_j b_2$ we use the boundedness of 
\begin{multline*}
\left( \frac{1}{|B_\eps|}\int\limits_{B_\eps} \norm{T_jb_2 -
(T_jb_2)_{B_\eps(}}dS \right)^2 \lesssim
\left(\frac{1}{|B_\eps|}\int\limits_{B_\eps} \norm{T_jb_2}
dS\right)^2\\
\lesssim
\frac{1}{|B_\eps|}\int\limits_{\dom} \norm{T_j b_2}^2 dS\lesssim
\frac{1}{|B_\eps|}\int\limits_{\dom} |b_2|^2 dS\\
 =\frac{1}{|B_\eps|}\int\limits_{B_{C\eps}}
|b(z)-b_{B_\eps}|^2 dS(z) \lesssim ||b||^2_{BMO}
\end{multline*}

Finally, estimating $T_j b_3$ we have
\begin{multline*}
\frac{1}{ |B_\eps |}\int\limits_{ B_\eps(z_0) } \norm{T_j b_3 -
(T_j b_3)_{ B_\eps }}dS\\ 
\lesssim
\frac{1}{ |B_\eps|^2 }\int\limits_{ B_\eps }\left(\int\limits_{ B_\eps }\norm{\int\limits_{ \dom }\left(K(\xi,w)-K(z,w)\right) b_3(w) dS(w)} dS(z)\right)dS(\xi)\\ 
\lesssim
 \frac{1}{ |B_\eps|^2 } \int\limits_{ B_\eps }\int\limits_{ B_\eps }\int\limits_{\dom}
 \frac{ d(\xi,z)^{1/2} }{ d(w,z)^{n+{1/2}} } |b_3(w)| dS(w)
 dS(z) dS(\xi)\\
  \lesssim \eps^{1/2} \int\limits_{\dom\setminus B_{ C\eps(z_0) }}
 \frac{ |b(w)-b_{C\eps(z_0)}| }{ d(w,z_0)^{n+{1/2}} }dS(w)\lesssim
 ||b||_{BMO},
\end{multline*}
 because $ d(\zeta,z)\lesssim \eps$ and $d(w,z)\asymp
 d(w,z_0)$ when $C>0$ is large enough.
The proof for $T'$ is analogous.
\qed}

By decomposition (\ref{eq:Luzin_decomposition}) this finalizes the proof of the theorem~\ref{thm:area_int_BMO}.
\section*{References}


\begin{thebibliography}{XXXX}

\bibitem{AB88} P. Ahern, J. Bruna, \textit{Maximal and Area Integral characterization of Hardy-Sobolev Spaces in the unit ball in $\Cn$}, Rev. Mat. Iberoamericana, Vol.~\textbf{4}, No.~\textbf{1}, 123-153 (1988)

\bibitem{AYu79} L. A. Aizenberg, A. P. Yuzhakov, \textit{Integral Representations and Residues in Complex
Analysis} [in Russian], Moscow (1979).

\bibitem{D77} E. M. Dyn'kin, \textit{Estimates of analytic functions in Jordan domain}, Zap. Nauch. Sem. LOMI, \textbf{73}, 70-90 (1977).


\bibitem{FS72} C. Fefferman, E.M. Stein, \textit{ $H^p$ spaces of several variables}, Acta mathematica, Vol.~\textbf{129}, No.~\textbf{1}, 137-193 (1972).

\bibitem{GLY04} L. Grafakos, L. Liu, D. Yang, \textit{Vector-valued singular integrals and maximal functions on spaces of homogeneous type,} Math. Scand. \textbf{104} (2009), 296-310

\bibitem{KL97} S. Krantz, S.Y. Li, \textit{ Area integral characterizations of functions in Hardy spaces
on domains in $\Cn$}, Complex Variables,
Vol.~\textbf{32}, No.~4, 373-399 (1997).

\bibitem{HW05} T. Hyt\"{o}nen, L. Weis, \textit{ A T1 theorem for integral transformations with operator-valued
kernel}, J. for Pure and Applied Math., Vol.~\textbf{2006},
No.~\textbf{599}, 155-200 (2006).

\bibitem{LS13} L. Lanzani, E. M. Stein, \textit{Cauchy-type integrals in several complex variables}, Bull. Math. Sci., Vol. \textbf{3}, No.\textbf{2}, 241-285 (2013).

\bibitem{LS14} L. Lanzani, E. M. Stein,  \textit{The Cauchy Integral in $\Cn$ for domains with minimal smoothness}, Adv. Math. \textbf{264}, 776-830 (2014).

\bibitem{L59} J. Leray, \textit{ Le calcul diff\'{e}rentiel et int\'{e}gral sur une vari\'{a}t\'{a} analytique complexe. (Probl\`{e}me de Cauchy. III.)} Bull. Soc. Math. Fr.~\textbf{87}, 81-180
(1959).

\bibitem{NSW81} A. Nagel, E.M. Stein, S. Wainger, \textit{Boundary behaviour of functions holomorphic in domains of finite type}. Proc. Nat. Accad. Sci. US, Vol.~\textbf{78}, No.~\textbf{11}, part~\textbf{1}, 6596-6599, (1981).


\bibitem{Ra86} R. M. Range, \textit{ Holomorphic functions and integral representations in several complex variables}, Springer Verlag (1986).

\bibitem{R12} A. S. Rotkevich, \textit{The Cauchy-Leray-Fantappi`e integral in linearly convex domains}, Zap. Nauch. Sem. POMI \textbf{401},
172-188 (2012).

\bibitem{R13} A. S. Rotkevich, \textit{Constructive description of the Besov classes in convex domains in $\Cn$}, Zap. Nauch. Sem. POMI~\textbf{401}, 136-174 (2013).

\bibitem{Sa93} G. Sandrine, \textit{Complex tangential characterizations of Hardy-Sobolev Spaces of holomorphic functions}, Rev. Mat. Iberoamericana, Vol.~\textbf{9}, 201-255  (1993).


\bibitem{Sh89} N. A. Shirokov, \textit{ Jackson-Bernstein theorem in strictly pseudoconvex domains in $\mathds{C}^n$}, Constr. Approx., Vol.~\textbf{5}, No.~\textbf{1}, 455-461 (1989).

\bibitem{Sh93} N. A. Shirokov, \textit{A direct theorem for strictly convex domains in $\mathds{C}^n$}, Zap. Nauch. Sem. POMI~\textbf{206}, 152-175  (1993).

\bibitem{S58} E.M. Stein, \textit{On the functions of Littlewood-Paley, Lusin, Marcinkiewicz}, Trans. Amer. Math. Soc., Vol.~\textbf{88}, 430-466  (1958).


\end{thebibliography}
\end{document}